\numberwithin{figure}{section}
\numberwithin{equation}{section}
\theoremstyle{plain}
\theoremstyle{plain}
\renewcommand{\paragraph}{%
  \@startsection{paragraph}{4}%
  {\z@}{1.25ex \@plus 1ex \@minus .2ex}{-1em}%
  {\normalfont\normalsize\bfseries}%
}
\providecommand{\lemmaname}{Lemma}
\providecommand{\theoremname}{Theorem}
\begin{document}
%--------------------------------------------------------------------------------------------------------------------------------
% Environment shortcuts
%--------------------------------------------------------------------------------------------------------------------------------
\def\balign#1\ealign{\begin{align}#1\end{align}}
\def\baligns#1\ealigns{\begin{align*}#1\end{align*}}
\def\balignat#1\ealign{\begin{alignat}#1\end{alignat}}
\def\balignats#1\ealigns{\begin{alignat*}#1\end{alignat*}}
\def\bitemize#1\eitemize{\begin{itemize}#1\end{itemize}}
\def\benumerate#1\eenumerate{\begin{enumerate}#1\end{enumerate}}

% Align environments that use textstyle instead of displaystyle
\newenvironment{talign*}
 {\let\displaystyle\textstyle\csname align*\endcsname}
 {\endalign}
\newenvironment{talign}
 {\let\displaystyle\textstyle\csname align\endcsname}
 {\endalign}

\def\balignst#1\ealignst{\begin{talign*}#1\end{talign*}}
\def\balignt#1\ealignt{\begin{talign}#1\end{talign}}
%---------------------------------------------------

%--------------------------------------------------------------------------------------------------------------------------------
% Redefine left and right to remove initial and trailing space
%--------------------------------------------------------------------------------------------------------------------------------
\let\originalleft\left
\let\originalright\right
\renewcommand{\left}{\mathopen{}\mathclose\bgroup\originalleft}
\renewcommand{\right}{\aftergroup\egroup\originalright}

%--------------------------------------------------------------------------------------------------------------------------------
% Words with special symbols
%--------------------------------------------------------------------------------------------------------------------------------
\def\Gronwall{Gr\"onwall\xspace}
\def\Holder{H\"older\xspace}
\def\Ito{It\^o\xspace}
\def\Nystrom{Nystr\"om\xspace}
\def\Schatten{Sch\"atten\xspace}
\def\Matern{Mat\'ern\xspace}

%--------------------------------------------------------------------------------------------------------------------------------
% Smaller citations
%--------------------------------------------------------------------------------------------------------------------------------
\def\tinycitep*#1{{\tiny\citep*{#1}}}
\def\tinycitealt*#1{{\tiny\citealt*{#1}}}
\def\tinycite*#1{{\tiny\cite*{#1}}}
\def\smallcitep*#1{{\scriptsize\citep*{#1}}}
\def\smallcitealt*#1{{\scriptsize\citealt*{#1}}}
\def\smallcite*#1{{\scriptsize\cite*{#1}}}

%--------------------------------------------------------------------------------------------------------------------------------
% Colors
%--------------------------------------------------------------------------------------------------------------------------------
\def\blue#1{\textcolor{blue}{{#1}}}
\def\green#1{\textcolor{green}{{#1}}}
\def\orange#1{\textcolor{orange}{{#1}}}
\def\purple#1{\textcolor{purple}{{#1}}}
\def\red#1{\textcolor{red}{{#1}}}
\def\teal#1{\textcolor{teal}{{#1}}}

%--------------------------------------------------------------------------------------------------------------------------------
% Font styles
%--------------------------------------------------------------------------------------------------------------------------------
\def\mbi#1{\boldsymbol{#1}} % Bold and italic (math bold italic)
\def\mbf#1{\mathbf{#1}}
\def\mrm#1{\mathrm{#1}}
\def\tbf#1{\textbf{#1}}
\def\tsc#1{\textsc{#1}}

%--------------------------------------------------------------------------------------------------------------------------------
% Bold and italic variables
%--------------------------------------------------------------------------------------------------------------------------------
\def\mbiA{\mbi{A}}
\def\mbiB{\mbi{B}}
\def\mbiC{\mbi{C}}
\def\mbiDelta{\mbi{\Delta}}
\def\mbif{\mbi{f}}
\def\mbiF{\mbi{F}}
\def\mbih{\mbi{g}}
\def\mbiG{\mbi{G}}
\def\mbih{\mbi{h}}
\def\mbiH{\mbi{H}}
\def\mbiI{\mbi{I}}
\def\mbim{\mbi{m}}
\def\mbiP{\mbi{P}}
\def\mbiQ{\mbi{Q}}
\def\mbiR{\mbi{R}}
\def\mbiv{\mbi{v}}
\def\mbiV{\mbi{V}}
\def\mbiW{\mbi{W}}
\def\mbiX{\mbi{X}}
\def\mbiY{\mbi{Y}}
\def\mbiZ{\mbi{Z}}

%--------------------------------------------------------------------------------------------------------------------------------
% Textstyle vs. displaystyle
%--------------------------------------------------------------------------------------------------------------------------------
\def\textsum{{\textstyle\sum}} % Sum in textstyle form
\def\textprod{{\textstyle\prod}} % Prod in textstyle form
\def\textbigcap{{\textstyle\bigcap}} % Bigcap in textstyle form
\def\textbigcup{{\textstyle\bigcup}} % Bigcup in textstyle form

%--------------------------------------------------------------------------------------------------------------------------------
% Mathematical sets
%--------------------------------------------------------------------------------------------------------------------------------
\def\reals{\mathbb{R}} % Real number symbol
\def\integers{\mathbb{Z}} % Integer symbol
\def\rationals{\mathbb{Q}} % Rational numbers
\def\naturals{\mathbb{N}} % Natural numbers
\def\complex{\mathbb{C}} % Complex numbers

\def\what#1{\widehat{#1}}

\def\twovec#1#2{\left[\begin{array}{c}{#1} \\ {#2}\end{array}\right]}
\def\threevec#1#2#3{\left[\begin{array}{c}{#1} \\ {#2} \\ {#3} \end{array}\right]}
\def\nvec#1#2#3{\left[\begin{array}{c}{#1} \\ {#2} \\ \vdots \\ {#3}\end{array}\right]} % An n-vector with three arguments

%--------------------------------------------------------------------------------------------------------------------------------
% Eigenvalues
%--------------------------------------------------------------------------------------------------------------------------------
\def\maxeig#1{\lambda_{\mathrm{max}}\left({#1}\right)}
\def\mineig#1{\lambda_{\mathrm{min}}\left({#1}\right)}

%--------------------------------------------------------------------------------------------------------------------------------
% Operators
%--------------------------------------------------------------------------------------------------------------------------------
\def\Re{\operatorname{Re}} % Real part
\def\indic#1{\mbb{I}\left[{#1}\right]} % Indicator function
\def\logarg#1{\log\left({#1}\right)} % log with argument
\def\polylog{\operatorname{polylog}}
\def\maxarg#1{\max\left({#1}\right)} % max with argument
\def\minarg#1{\min\left({#1}\right)} % min with argument
\def\Earg#1{\E\left[{#1}\right]}
\def\Esub#1{\E_{#1}}
\def\Esubarg#1#2{\E_{#1}\left[{#2}\right]}
\def\bigO#1{\mathcal{O}\left(#1\right)} % big-oh notation
\def\littleO#1{o(#1)} % big-oh notation
\def\P{\mbb{P}} % Probability symbol
\def\Parg#1{\P\left({#1}\right)}
\def\Psubarg#1#2{\P_{#1}\left[{#2}\right]}
\def\Trarg#1{\Tr\left[{#1}\right]} % Trace with argument
\def\trarg#1{\tr\left[{#1}\right]} % trace with argument
\def\Var{\mrm{Var}} % Variance symbol
\def\Vararg#1{\Var\left[{#1}\right]}
\def\Varsubarg#1#2{\Var_{#1}\left[{#2}\right]}
\def\Cov{\mrm{Cov}} % Covariance symbol
\def\Covarg#1{\Cov\left[{#1}\right]}
\def\Covsubarg#1#2{\Cov_{#1}\left[{#2}\right]}
\def\Corr{\mrm{Corr}} % Covariance symbol
\def\Corrarg#1{\Corr\left[{#1}\right]}
\def\Corrsubarg#1#2{\Corr_{#1}\left[{#2}\right]}
\newcommand{\info}[3][{}]{\mathbb{I}_{#1}\left({#2};{#3}\right)} % Information symbol
\newcommand{\staticexp}[1]{\operatorname{exp}(#1)} % An exponential with parens that do not resize with input
\newcommand{\loglihood}[0]{\mathcal{L}} % log likelihood

% Copied from mathrsfs.sty

%--------------------------------------------------------------------------------------------------------------------------------
% Optimization macros
%--------------------------------------------------------------------------------------------------------------------------------
%\providecommand{\argmax}{\mathop\mathrm{arg max}} % Defining math symbols
%\providecommand{\argmin}{\mathop\mathrm{arg min}}
\providecommand{\arccos}{\mathop\mathrm{arccos}}
\providecommand{\dom}{\mathop\mathrm{dom}}
\providecommand{\diag}{\mathop\mathrm{diag}}
\providecommand{\tr}{\mathop\mathrm{tr}}
\providecommand{\card}{\mathop\mathrm{card}}
\providecommand{\sign}{\mathop\mathrm{sign}}
\providecommand{\conv}{\mathop\mathrm{conv}} % Convex hull
\def\rank#1{\mathrm{rank}({#1})}
\def\supp#1{\mathrm{supp}({#1})}

\providecommand{\minimize}{\mathop\mathrm{minimize}}
\providecommand{\maximize}{\mathop\mathrm{maximize}}
\providecommand{\subjectto}{\mathop\mathrm{subject\;to}}

\def\openright#1#2{\left[{#1}, {#2}\right)}

%--------------------------------------------------------------------------------------------------------------------------------
% Proof environments
%--------------------------------------------------------------------------------------------------------------------------------
\ifdefined\nonewproofenvironments\else
% The Theorems are numbered consecutively
% Lemmas are numbered by section, and observations, claims, facts, and 
% assumptions take their numbering. Propositions and definitions have their
% own numbering by section.
\ifdefined\ispres\else
% These conflict with Beamer definitions in pres mode
\newtheorem{theorem}{Theorem}
\newtheorem{lemma}[theorem]{Lemma}
\newtheorem{corollary}[theorem]{Corollary}
\newtheorem{definition}[theorem]{Definition}
\newtheorem{fact}[theorem]{Fact}
\renewenvironment{proof}{\noindent\textbf{Proof.}\hspace*{.3em}}{\qed \vspace{.1in}}
\newenvironment{proof-sketch}{\noindent\textbf{Proof Sketch}
  \hspace*{1em}}{\qed\bigskip\\}
\newenvironment{proof-idea}{\noindent\textbf{Proof Idea}
  \hspace*{1em}}{\qed\bigskip\\}
\newenvironment{proof-of-lemma}[1][{}]{\noindent\textbf{Proof of Lemma {#1}}
  \hspace*{1em}}{\qed\\}
  \newenvironment{proof-of-proposition}[1][{}]{\noindent\textbf{Proof of Proposition {#1}}
  \hspace*{1em}}{\qed\\}
\newenvironment{proof-of-theorem}[1][{}]{\noindent\textbf{Proof of Theorem {#1}}
  \hspace*{1em}}{\qed\\}
\newenvironment{proof-attempt}{\noindent\textbf{Proof Attempt}
  \hspace*{1em}}{\qed\bigskip\\}
\newenvironment{proofof}[1]{\noindent\textbf{Proof of {#1}}
  \hspace*{1em}}{\qed\bigskip\\}
 
\newtheorem*{remark*}{Remark}
\newenvironment{remark}{\noindent\textbf{Remark.}
  \hspace*{0em}}{\smallskip}%\bigskip}
\newenvironment{remarks}{\noindent\textbf{Remarks}
  \hspace*{1em}}{\smallskip}
\fi
\newtheorem{observation}[theorem]{Observation}
\newtheorem{proposition}[theorem]{Proposition}
\newtheorem{claim}[theorem]{Claim}
\newtheorem{assumption}{Assumption}
\theoremstyle{definition}
\newtheorem{example}[theorem]{Example}
\theoremstyle{remark}
\newtheorem{intuition}[theorem]{Intuition}
%\renewcommand{\theassumption}{\Alph{assumption}} % Set counter for assumptions
                                                 % to be alphabetical
\fi
% Makes equation numbers have (1.1) style
% \numberwithin{equation}{section}
% \numberwithin{equation}{subsection}
\makeatletter
\@addtoreset{equation}{section}
\makeatother
\def\theequation{\thesection.\arabic{equation}}

\newcommand{\cmark}{\ding{51}}

\newcommand{\xmark}{\ding{55}}

%--------------------------------------------------------------------------------------------------------------------------------
% Equation environments
%--------------------------------------------------------------------------------------------------------------------------------
\newcommand{\eq}[1]{\begin{align}#1\end{align}}
\newcommand{\eqn}[1]{\begin{align*}#1\end{align*}}
\renewcommand{\Pr}[1]{\mathbb{P}\left( #1 \right)}
\newcommand{\Ex}[1]{\mathbb{E}\left[#1\right]}
%\newcommand{\var}[1]{\text{Var}\left(#1\right)}
%\newcommand{\ind}[1]{{\mathbbm{1}}_{\{ #1 \}} }
%\newcommand{\abs}[1]{\left|#1\right|}

%--------------------------------------------------------------------------------------------------------------------------------
% Comment environments
%--------------------------------------------------------------------------------------------------------------------------------
\newcommand{\matt}[1]{{\textcolor{blue}{[Matt: #1]}}}

\global\long\def\bw{\textsf{Ball walk}}%
\global\long\def\dw{\textup{\textsf{Dikin walk}}}%
\global\long\def\sw{\textup{\textsf{Speedy walk}}}%

\global\long\def\E{\mathbb{E}}%

\global\long\def\N{\mathbb{N}}%

\global\long\def\R{\mathbb{R}}%

\global\long\def\Z{\mathbb{Z}}%

\global\long\def\veps{\varepsilon}%

\global\long\def\vol{\textrm{vol}}%

\global\long\def\bs#1{\boldsymbol{#1}}%

\global\long\def\eu#1{\EuScript{#1}}%

\global\long\def\mb#1{\mathbf{#1}}%

\global\long\def\mbb#1{\mathbb{#1}}%

\global\long\def\mc#1{\mathcal{#1}}%

\global\long\def\mf#1{\mathfrak{#1}}%

\global\long\def\ms#1{\mathscr{#1}}%

\global\long\def\mss#1{\mathsf{#1}}%

\global\long\def\msf#1{\mathsf{#1}}%

\global\long\def\on#1{\operatorname{#1}}%

\global\long\def\D{\mathrm{d}}%
\global\long\def\grad{\nabla}%
 
\global\long\def\hess{\nabla^{2}}%
 
\global\long\def\lapl{\triangle}%
 
\global\long\def\deriv#1#2{\frac{d#1}{d#2}}%
 
\global\long\def\pderiv#1#2{\frac{\partial#1}{\partial#2}}%
 
\global\long\def\de{\partial}%
\global\long\def\lagrange{\mathcal{L}}%

\global\long\def\Gsn{\mathcal{N}}%
 
\global\long\def\BeP{\textnormal{BeP}}%
 
\global\long\def\Ber{\textnormal{Ber}}%
 
\global\long\def\Bern{\textnormal{Bern}}%
 
\global\long\def\Bet{\textnormal{Beta}}%
 
\global\long\def\Beta{\textnormal{Beta}}%
 
\global\long\def\Bin{\textnormal{Bin}}%
 
\global\long\def\BP{\textnormal{BP}}%
 
\global\long\def\Dir{\textnormal{Dir}}%
 
\global\long\def\DP{\textnormal{DP}}%
 
\global\long\def\Expo{\textnormal{Expo}}%
 
\global\long\def\Gam{\textnormal{Gamma}}%
 
\global\long\def\GEM{\textnormal{GEM}}%
 
\global\long\def\HypGeo{\textnormal{HypGeo}}%
 
\global\long\def\Mult{\textnormal{Mult}}%
 
\global\long\def\NegMult{\textnormal{NegMult}}%
 
\global\long\def\Poi{\textnormal{Poi}}%
 
\global\long\def\Pois{\textnormal{Pois}}%
 
\global\long\def\Unif{\textnormal{Unif}}%

\global\long\def\Abs#1{\left\lvert #1\right\rvert }%
\global\long\def\Par#1{\left(#1\right)}%
\global\long\def\Brack#1{\left[#1\right]}%
\global\long\def\Brace#1{\left\{ #1\right\} }%

\global\long\def\inner#1{\left\langle #1\right\rangle }%
 
\global\long\def\binner#1#2{\left\langle {#1},{#2}\right\rangle }%

%\global\long\def\norm#1{\left\lVert #1\right\rVert }%
\global\long\def\onenorm#1{\norm{#1}_{1}}%
\global\long\def\twonorm#1{\norm{#1}_{2}}%
\global\long\def\infnorm#1{\norm{#1}_{\infty}}%
\global\long\def\fronorm#1{\norm{#1}_{\text{F}}}%
\global\long\def\nucnorm#1{\norm{#1}_{*}}%
\global\long\def\staticnorm#1{\|{#1}\|}%
\global\long\def\statictwonorm#1{\staticnorm{#1}_{2}}%

%\global\long\def\KL#1#2{\textnormal{KL}({#1}\Vert{#2})}%
\global\long\def\dtv#1#2{d_{\textrm{TV}}\Par{#1,#2}}%

%--------------------------------------------------------------------------------------------------------------------------------
% Special symbols
%--------------------------------------------------------------------------------------------------------------------------------
\global\long\def\<{}%
 % Angle brackets

\global\long\def\iff{\Leftrightarrow}%
\global\long\def\chooses#1#2{_{#1}C_{#2}}%
 
\global\long\def\defeq{\triangleq}%
 % defined equal to
%\def\bs{\backslash} % backslash
\global\long\def\half{\frac{1}{2}}%
 
\global\long\def\nhalf{\nicefrac{1}{2}}%
 
\global\long\def\textint{{\textstyle \int}}%
 % Sum in textstyle form
\global\long\def\texthalf{{\textstyle \frac{1}{2}}}%
 
\global\long\def\textfrac#1#2{{\textstyle \frac{#1}{#2}}}%

% Symbols

\global\long\def\acts{\circlearrowright}%
 
\global\long\def\bun#1#2{e_{#1} \otimes{} e_{#2}}%
 
\global\long\def\closeopen#1{\lbrack#1 \rparen}%
 
\global\long\def\comp{\mss c}%
 
\global\long\def\deq{\coloneqq}%
 
\global\long\def\e{\mathrm{e}}%
 
\global\long\def\eqas{\overset{\text{a.s.}}{=}}%
 
\global\long\def\eqdef{\mathrel{\overset{\makebox[0pt]{\mbox{\normalfont\tiny def.}}}{=}}}%
 
\global\long\def\gcoeff{\genfrac{\lbrack}{\rbrack}{0pt}{}}%
 
\global\long\def\im{\msf i}%
 
\global\long\def\indep{\mathrel{\text{\scalebox{1.07}{\ensuremath{\perp\mkern-10mu \perp}}}}}%
 
\global\long\def\lr{\leftrightarrow}%
 
\global\long\def\lrarrow{\leftrightarrow}%
 
\global\long\def\mmid{\mathbin{\|}}%
 
\global\long\def\openclose#1{\lparen#1 \rbrack}%
 
\global\long\def\relmid{\mathrel{}\middle|\mathrel{}}%
 
\global\long\def\rest{\upharpoonright}%
 
\global\long\def\simiid{\overset{\text{i.i.d.}}{\sim}}%
 
\global\long\def\toae#1{\xrightarrow[#1]{\text{a.e.}}}%
 
\global\long\def\toas#1{\xrightarrow[#1]{\text{a.s.}}}%
 
\global\long\def\toprob#1{\xrightarrow[#1]{\mathbb{P}}}%
 
\global\long\def\wmaj{\mathbin{\prec_{\msf w}}}%
 
\global\long\def\T{\mathsf{T}}%

%--------------------------------------------------------------------------------------------------------------------------------
% Common differentials with a small space in front of them
%--------------------------------------------------------------------------------------------------------------------------------
\global\long\def\dee{\mathop{\mathrm{d}\!}}%
 
\global\long\def\dt{\,\dee t}%
 
\global\long\def\ds{\,\dee s}%
 
\global\long\def\dx{\,\dee x}%
 
\global\long\def\dy{\,\dee y}%
 
\global\long\def\dz{\,\dee z}%
 
\global\long\def\dv{\,\dee v}%
 
\global\long\def\dw{\,\dee w}%
 
\global\long\def\dr{\,\dee r}%
 
\global\long\def\dB{\,\dee B}%
 % Brownian motion
\global\long\def\dW{\,\dee W}%
 % Wiener process
\global\long\def\dmu{\,\dee\mu}%
 
\global\long\def\dnu{\,\dee\nu}%
 
\global\long\def\domega{\,\dee\omega}%

%--------------------------------------------------------------------------------------------------------------------------------
% Set notation
%--------------------------------------------------------------------------------------------------------------------------------
\global\long\def\smiddle{\mathrel{}|\mathrel{}}%
 % Well-spaced \middle | symbol
%--------------------------------------------------------------------------------------------------------------------------------
%Text with quads around it
%--------------------------------------------------------------------------------------------------------------------------------
\global\long\def\qtext#1{\quad\text{#1}\quad}%
 % Semidefinite orders
\global\long\def\psdle{\preccurlyeq}%
 
\global\long\def\psdge{\succcurlyeq}%
 
\global\long\def\psdlt{\prec}%
 
\global\long\def\psdgt{\succ}%

%--------------------------------------------------------------------------------------------------------------------------------
% Vectors and matrices
%--------------------------------------------------------------------------------------------------------------------------------
\global\long\def\boldone{\mbf{1}}%
 % Bold 1
\global\long\def\ident{\mbf{I}}%
 % Identity matrix
% \def\v#1{\mbi{#1}} % Vector notation

%--------------------------------------------------------------------------------------------------------------------------------
% Probability and statistics macros
%--------------------------------------------------------------------------------------------------------------------------------
\global\long\def\eqdist{\stackrel{d}{=}}%
 
\global\long\def\todist{\stackrel{d}{\to}}%
 
\global\long\def\eqd{\stackrel{d}{=}}%
 
\global\long\def\independenT#1#2{\mathrel{\rlap{$#1#2$}\mkern4mu {#1#2}}}%

\global\long\def\ind{\mathds{1}}%

\title{Sharp propagation of chaos in R\'enyi divergence}

 \author{
Matthew S.\ Zhang\thanks{
  Department of Computer Science at
  University of Toronto, and Vector Institute, \texttt{matthew.zhang@mail.utoronto.ca}
}
}

\maketitle
\begin{abstract}
We establish sharp rates for propagation of chaos in \emph{R\'enyi divergences} for interacting diffusion systems at stationarity. Building upon the entropic hierarchy established in~\cite{lacker2023hierarchies}, we show that under strong isoperimetry and weak interaction conditions, one can achieve $\Renyi_q(\mu^1 \mmid \pi) = \widetilde O(\frac{d q^2}{N^2})$ bounds on the $q$-R\'enyi divergence.
\end{abstract}

\section{Introduction}

In this work, we study propagation of chaos in divergences stronger than entropy. Consider minimization of energy functionals of the following type,
\begin{align}\label{eq:composite-energy}
    \mc E(\mu) = \int V \, \D \mu + \frac{1}{2} \iint W(x-y) \, \D \mu^{\otimes 2}(x, y) + \int \log \mu \, \D \mu\,.
\end{align}
Write $\pi$ for the minimizer of this energy functional, which exists and is unique under some convexity conditions of $\mc E$ over the space of measures. If such a minimizer does exist, then it satisfies the following implicit equation:
\begin{align}\label{eq:mf-minimizer}
    \pi \propto \exp\Bigl(-V - \int W(\cdot - z) \, \D \pi(z)\Bigr)\,.
\end{align}

An alternate viewpoint describes $\pi$ as the stationary law of a particle following the McKean--Vlasov equation, which describes the Wasserstein gradient flow of $\mc E$,
\begin{align}\label{eq:mckean-vlasov}
    \D Y_t = \Bigl( - \nabla V(Y_t) - \int \nabla W(Y_t - \cdot) \, \D \operatorname{law}(Y_t)(\cdot)\Bigr) \, \D t + \sqrt{2} \, \D B_t\,.
\end{align}
The presence of an integral against $\operatorname{law}(Y_t)$ in~\eqref{eq:mckean-vlasov} makes it difficult to simulate. To correct this, one considers a system of evolving particles, $(X_t^i)_{t \geq 0, i \in [N]}$, with each $X_t^i \in \R^d$, satisfying for independent Brownian motions $(B_t^i)_{i \in [N], t \geq 0}$,
\begin{align}\label{eq:finite-particle}
    \D X_t^i = \Bigl( - \nabla V(X_t^i) - \frac{1}{N-1} \sum_{\substack{j \in [N] \\ j \neq i}} \nabla W(X_t^i- X_t^j)\Bigr) \, \D t + \sqrt{2} \, \D B_t^i\,.
\end{align}
Again, under mild conditions, this system admits a stationary measure
\begin{align}\label{eq:finite-particle-stat}
    \mu^{1:N}(X^{1:N}) \propto \exp\Bigl(-\sum_{i=1}^N V(X^i) -\frac{1}{2(N-1)} \sum_{i \in [N], j \in [N] \backslash i} W(X^i - X^j) \Bigr)\,.
\end{align}
One is naturally led to inquire how close $\mu^{1:N}$ is to $\pi^{\otimes N}$ under appropriate assumptions on $V, W$. This general question is known as (stationary) \textbf{propagation of chaos}\footnote{More canonically, propagation of chaos is typically used to show evolving bounds along the dynamic systems given by~\eqref{eq:mckean-vlasov} and~\eqref{eq:finite-particle}, which can be used to demonstrate properties at stationarity. However, the two phenomena are related and the techniques used are not particularly different.}.

Early results by~\cite{sznitman2006topics, malrieu2003convergence} have led to quantitative bounds on the quality of this approximation, both in Wasserstein and relative entropy senses, which follow from a subadditivity type result $\sum_{i=1}^n W_2^2(\mu^i, \pi) \leq W_2^2(\mu^{1:N}, \pi^{\otimes N})$. However, the result of~\cite{lacker2023hierarchies} and subsequent work~\citep{lacker2023sharp, ren2024size} has established that, under certain convexity conditions, we have bounds $\KL(\mu^1 \mmid \pi) \lesssim 1/N^2$, which is much smaller than the $1/N$ typically obtained via central limit theorems or subadditivity.

\paragraph{R\'enyi divergences.}
In this work, we are interested in stronger notions of convergence: namely, the $q$-R\'enyi divergences. For two measures $\mu, \nu$ with $\mu \ll \nu$, we define for $q > 1$
\begin{align}\label{eq:renyi}
    \Renyi_q(\mu \mmid \nu) \deq \frac{1}{q-1} \log \E_\nu \bigl(\frac{\D \mu}{\D \nu}\bigr)^q\,,
\end{align}
with $\lim_{q \searrow 1} \Renyi_q(\mu \mmid \nu) = \KL(\mu \mmid \nu)$.
These divergences are a natural strengthening of the relative entropy, with guarantees in $q$-R\'enyi immediately implying closeness in relative entropy and total variation distance (and the Wasserstein distance under a transport inequality). Furthermore, they allow for fine-grained control of tail probabilities, which has implications for statistical procedures which use~\eqref{eq:finite-particle-stat} as an approximation for~\eqref{eq:mf-minimizer} (or vice versa). See Corollary~\ref{cor:change-measure} and the following remark for some further discussions.

Establishing propagation of chaos in R\'enyi divergence is not simple. Firstly, the classical argument for both relative entropy and Wasserstein distances bounds the relative entropy $\KL(\mu^{1:N} \mmid \pi^{\otimes N}) \lesssim O(1)$, and then uses subadditivity to conclude. This, however, does not work in R\'enyi divergence, as the analogous subadditivity property does not hold in general, even when the second argument is a product measure.

The recent work of~\cite{hess2025higher} studies propagation of chaos in $\chi^2$-divergence for interactions in the dynamical setting (i.e., for the solutions evolving along~\eqref{eq:finite-particle} and~\eqref{eq:mckean-vlasov} outside of stationarity), which they then apply towards deriving measures which better approximate (compared to the mean-field dynamics) the finite-particle dynamics. This also relies on a hierarchical analysis, but one which is highly dependent on the boundedness of the interaction kernel. This is not easily extended to more general cases, and does not seem to generalize well beyond $q = 2$.

Instead, we aim to directly apply the hierarchical approach of~\cite{lacker2023hierarchies}. However, compared to the hierarchy in~\cite{lacker2023hierarchies}, we cannot apply transport and log-Sobolev inequalities immediately to obtain a recursive system. This is because the R\'enyi analogue of the Fisher information involves the expectation of the square norm of the score $\nabla \log \frac{\mu^1}{\pi}$ under a reweighted measure, which does not easily compare to a Wasserstein distance or another R\'enyi divergence.

Through some analytic innovations, we show that it suffices to demonstrate that the relative score $\nabla \log \frac{\mu^{1}}{\pi}$ is sub-Gaussian under $\mu^1$ with variance proxy which diminishes as $1/N^2$, for $N \gg 1$. To study this, we provide bounds on the Lipschitz constant of the relative score. This relies on a hierarchical argument which has a similar flavour as previous work~\citep{lacker2023hierarchies}, but which aims to bound the Wasserstein distance between two conditional measures $\mu^{k+1 \mid [k]}(\cdot \mid x^{[k]})$, $\mu^{k+1 \mid [k]}(\cdot \mid \bar x^{[k]})$ in terms of the distance of their conditioning points, $\norm{x^{[k]} - \bar x^{[k]}}^2$.

This allows us to successfully study the propagation of chaos in R\'enyi divergences, showing the following:
\begin{center}
If $\mu$, $\pi$ satisfy sufficiently strong isoperimetric and smoothness conditions, then $\Renyi_q(\mu^1 \mmid \pi) \lesssim \widetilde O(\frac{d q^2}{N^2})$, for $N$ sufficiently large.
\end{center}
This answers a conjecture posed in~\citet[Section 5]{kook2024sampling}, and also allows one to quantify the complexity of obtaining samples close to $\pi$ in $q$-R\'enyi divergence. Furthermore, it serves as a natural extension of the work in~\cite{lacker2023sharp}, translating to bounds on $\norm{\frac{\D \mu^1}{\D \pi}}_{L^q(\pi)}$ for $q \geq 1$.

\subsection{Related work}

The study of \emph{propagation of chaos} has a venerable history which is not easily summarized; we touch only upon the most relevant works below. Beginning with the seminal work of~\cite{sznitman2006topics} and~\cite{malrieu2001logarithmic, malrieu2003convergence}, quantitative chaos bounds on the order of $1/N$ in $W_2^2$ or $\KL$ divergence have been shown by first bounding the divergence between $\mu^{1:N}$ and $\pi^{\otimes N}$ by something that is independent of $N$, and then applying subadditivity. This line of thought has been successfully generalized to the case of singular interactions of $1/\norm{x^i - x^j}^\alpha$ type~\citep{jabin2017mean, jabin2018quantitative, bresch2023mean}, $\alpha > 0$, which permits the study of Coulomb and Riesz gases, as well as other physical systems. See~\cite{chaintron2022propagation} for a more complete survey.

A surprising result from~\cite{lacker2023hierarchies} showed that this rate could be improved to $\widetilde{O}(k^2/N^2)$ under a ``weak interaction'' assumption (requiring a trade-off between the isoperimetric constant of $\pi$, and the smoothness of $W$) which is sharp for e.g., quadratic interactions and potentials. This was further developed in~\cite{lacker2023sharp, grass2025sharp} to handle more general settings, and in~\cite{grass2025propagation} for the Fisher information. The work of~\cite{ren2024size} also significantly expanded the scope, removing the ``weak interaction'' assumption by relying on a ``non-linear'' log-Sobolev inequality.

We also mention that a similar ``propagation of chaos'' for minimizers of functionals of a more general form has been proposed and studied in works such as~\cite{chen2025uniform, chen2024uniform, wang2024uniform}. This covers some applications such as training dynamics for two-layer neural networks which do not fall under~\eqref{eq:composite-energy}. However, this setting has not proven amenable to analysis using hierarchical techniques, and the $1/N^2$ rate remains elusive in this setting.

The most relevant work to ours is~\cite{hess2025higher}, which also provides a means of establishing higher order propagation of chaos in the dynamical setting. When the interaction is a.s.\ bounded, they show the sharp rate of $1/N^2$ via hierarchical techniques reminiscent of~\cite{lacker2023hierarchies} in the $\chi^2$-divergence (equivalent to $2$-R\'enyi), and which do not contain a ``weak interaction'' assumption. However, their analysis cannot be easily adapted to the case of unbounded-but-smooth interactions considered in this work. Finally,~\cite{bresch2022new} offer a different proof for dynamical propagation of chaos in some $\chi^p$-norms, but which only works for short times and without quantitative $N$-dependence.

\section{Preliminaries}

\subsection{Notation}
We use the notation $a \lesssim b$, $a = O(b)$ if $a \leq C b$ for a constant $C$ for an absolute constant $C$ (i.e., a constant which is independent of $d, N, k, q, \bar C_{\LSI}, \beta_W$, etc.). Similarly, we use $a \gtrsim b, a = \Omega(b)$, if $a \geq Cb$, and $a \asymp b, a = \Theta(b)$ if $a \lesssim b$ and $a \gtrsim b$ simultaneously. Finally, we use $a = \Otilde(b)$ if $a \lesssim b \log^C b$ where $C$ is again an absolute constant, and define $\widetilde \Theta$, $\widetilde \Omega$ analogously. We use $[k]$ to denote the set $\{1, 2, \dotsc, k\}$. We use the following vector notation, with $X^{1:N} = [X^1, \dotsc, X^N] \in \R^{d \times N}$, $X^{j:k} = [X^j, X^{j+1}, \dotsc, X^k]$ for $j \leq k$, $X^{-i} = [X^1, \dotsc, X^{i-1}, X^{i+1}, \dotsc, X^N]$. Similarly, we use this notation for the measures associated with these random variables, e.g.\  $\mu^{i \mid -i}(\cdot \mid x^{-i})$ for the measure of $X^i$ given $X^{-i} = x^{-i}$, $x^{-i} \in \R^{d \times (N-1)}$.

\subsection{Background}

\paragraph{Divergences/distances between probability measures}

For $\mu, \nu \in \mc P(\R^d)$ with finite second moments, we define
\begin{align*}
    W_2^2(\mu, \nu) = \inf_{\gamma \in \Gamma(\mu, \nu)} \E_{(X, Y) \sim \gamma}[\norm{X-Y}^2]\,,
\end{align*}
where $\Gamma(\mu, \nu)$ is the set of measures on the product space $\R^d \times \R^d$ with marginals $\mu, \nu$ respectively. We define the relative entropy and Fisher information for $\mu \ll \nu$ as, respectively
\begin{align*}
    \KL(\mu \mmid \nu) = \E_{\mu} \log \frac{\D \mu}{\D \nu}\,, \qquad \FI(\mu \mmid \nu) = \E_\mu \Bigl[\Bigl\lVert\,\nabla \log \frac{\D \mu}{\D \nu}\,\Bigr\rVert^2\Bigr]\,.
\end{align*}
Recalling the definition of the R\'enyi divergence in~\eqref{eq:renyi}, we define the R\'enyi Fisher information as, with $\rho \deq \mu/\nu$ as usual,
\begin{align*}
    \RFI_q(\mu \mmid \nu) \deq \frac{4}{q} \frac{\E_\nu[\norm{\nabla (\rho^{q/2})}^2]}{\E_\nu[\rho^q]} = q \E_\mu[\psi^q \norm{\nabla \log \rho}^2]\,, \qquad \psi^q \deq \frac{\rho^{q-1}}{\E_\nu[\rho^q]}\,.
\end{align*}

\paragraph{Isoperimetric inequalities}
We say a measure $\pi$ on $\R^d$ satisfies a \emph{log-Sobolev} inequality which holds with constant $C_{\LSI}$ if for all compactly supported and smooth $f: \R^d \to \R$,
\begin{align}\label{eq:lsi}\tag{$\msf{LSI}$}
    \operatorname{ent}_\pi f \deq \E_\pi \Bigl[f \log \frac{f}{\E_\pi f} \Bigr] \leq \frac{C_{\LSI}}{2}\, \E_\pi[\norm{\nabla f}^2]\,.
\end{align}
Additionally, we say that a measure $\pi \in \mc P_{2, \operatorname{ac}}(\R^d)$ satisfies Talagrand's inequality with constant $C_{\msf T_2}$, which is implied by~\eqref{eq:lsi} with the same constant, if for all measures $\mu \in \mc P_{2, \operatorname{ac}}(\R^d)$ we have
\begin{align}\label{eq:talagrand}\tag{$\msf T_2$}
    W_2^2(\mu, \pi) \leq 2C_{\msf T_2} \KL(\mu \mmid \pi)\,.
\end{align}

\subsection{Main results}

We require a smoothness assumption on the potential $W$.
\begin{assumption}\label{as:smoothness}
    The potential $W$ is $\beta_W$-smooth, implying that for all $x, y \in \R^d$,
    \begin{align*} 
        \norm{\nabla W(x) - \nabla W(y)} \leq \beta_W \norm{x-y}\,.
    \end{align*}
\end{assumption}
We also assume a notion of strong isoperimetry, given below.
\begin{assumption}\label{as:isoperimetry}
     Assume that $\mu^{k+1:k+\ell \mid [k]}(\cdot \mid x^{[k]})$ satisfies~\eqref{eq:lsi} with constant $\bar C_{\LSI}$ uniformly in $x^{[k]} \in \R^{d \times k}$ for all $k \in [N-1]$, $\ell \in [N-k]$, and that a minimizer $\pi$ exists for~\eqref{eq:composite-energy} and satisfies~\eqref{eq:lsi} with the same constant. Furthermore, suppose that $\mu^{[k]}$ also satisfies~\eqref{eq:lsi} with the same constant.
\end{assumption}
\begin{remark}
    This requires that the finite-particle measure $\mu^{[k]}$ satisfy~\eqref{eq:lsi} with a constant independent of $N$, for which it suffices that $\mu^{[N]}$ satisfy~\eqref{eq:lsi} with the same constant. See \S\ref{scn:examples} for some discussion of when this holds. 
\end{remark}

We now introduce the following notion of very weak interaction, which is a strengthening of the weak interaction condition found in the original sharp propagation of chaos result~\citep{lacker2023hierarchies}. It does not reflect the subsequent improvement in~\cite{ren2024size}, which is able to dispense with this assumption using a non-linear variant of the log-Sobolev inequality. As far as we are aware, the techniques of the latter paper cannot be adapted for our argument.

\begin{assumption}[Very weak interaction]\label{as:weak-interaction}
    Let $\beta_W \bar C_{\LSI} \lesssim 1$ for a small enough implied constant.
\end{assumption}
Broadly speaking, this assumption permits some degree of non-convexity in the problem, although this amount needs to be on a smaller scale than the smoothness parameter of $W$.

\begin{theorem}[Sharp propagation of chaos under very weak interaction]\label{thm:main}
    Suppose Assumptions~\ref{as:smoothness},~\ref{as:isoperimetry} and~\ref{as:weak-interaction} hold, and $N = \widetilde \Omega(1 \vee \sqrt{d} k^{3/2} q)$ for a sufficiently large implied constant. Then,
    \begin{align*}
        \Renyi_q(\mu^{[k]} \mmid \pi^{\otimes k}) \lesssim \Otilde\Bigl(\frac{dk^3 q^2}{N^2}\Bigr)\,.
    \end{align*}
    In particular, for $k = O(1)$, this implies sharp R\'enyi propagation of chaos rates with order $O(N^{-2})$.
\end{theorem}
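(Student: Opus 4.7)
The plan is to prove Theorem~\ref{thm:main} in three stages following the program sketched in the introduction: (i) bound the Lipschitz constant of the relative score $s \deq \nabla \log(\mu^{[k]}/\pi^{\otimes k})$ via a hierarchical Wasserstein estimate on conditional measures; (ii) upgrade this to sub-Gaussian concentration of $s$ under $\mu^{[k]}$ using the log-Sobolev inequality; and (iii) convert sub-Gaussian control of $s$ into the R\'enyi divergence bound.

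Stage (i) is the computational heart. Applying the Stein identity to the marginal of~\eqref{eq:finite-particle-stat} and comparing with the mean-field equation~\eqref{eq:mf-minimizer}, the $i$-th block decomposes as
\begin{align*}
    s^i(x^{[k]}) = \E_\pi[\nabla W(x^i - \cdot)] - \frac{1}{N-1}\sum_{j \in [k]\setminus i} \nabla W(x^i - x^j) - \frac{N-k}{N-1}\,\E_{\mu^{k+1\mid[k]}(\cdot\mid x^{[k]})}[\nabla W(x^i - X^{k+1})]\,.
\end{align*}
Rearranging, the leading contribution is the discrepancy between $\E_\pi[\nabla W(x^i-\cdot)]$ and $\E_{\mu^{k+1\mid[k]}(\cdot\mid x^{[k]})}[\nabla W(x^i-\cdot)]$, plus an explicit $O(k/N)$ correction coming from the direct sum over $[k]$. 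The Lipschitz constant of the former as a function of $x^{[k]}$ reduces to controlling $W_2(\mu^{k+1\mid[k]}(\cdot\mid x^{[k]}), \mu^{k+1\mid[k]}(\cdot\mid \bar x^{[k]}))$ in terms of $\|x^{[k]} - \bar x^{[k]}\|$. I would derive this via a hierarchical recursion in the spirit of~\cite{lacker2023hierarchies}: expanding the conditional density using~\eqref{eq:finite-particle-stat}, constructing a synchronous coupling between the two conditional measures, and applying~\eqref{eq:talagrand} together with $\beta_W$-smoothness of $W$ closes a recursion in the block size, which under Assumption~\ref{as:weak-interaction} admits a uniform-in-$k$ solution of order $\bar C_{\LSI}\beta_W/N$. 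All told, $s$ should come out $\Otilde(k/N)$-Lipschitz.

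For stage (ii), Herbst's argument applied under the~\eqref{eq:lsi} of $\mu^{[k]}$ yields sub-Gaussian concentration of $s$ under $\mu^{[k]}$ with variance proxy $\sigma^2 = \Otilde(dk^3/N^2)$, and a direct computation using the mean-field identity bounds $\|\E_{\mu^{[k]}} s\|^2$ at the same order. For stage (iii), I would differentiate $\Renyi_q(\mu_t^{[k]} \mmid \pi^{\otimes k})$ along the Langevin flow targeting $\pi^{\otimes k}$, yielding $\partial_t \Renyi_q = -\RFI_q = -q\,\E[\psi^q \|s_t\|^2]$. The $\psi^q$-reweighting, which is precisely the obstruction mentioned in the introduction, can be handled via a tilting argument: $\E_\mu[\psi^q \|s\|^2] = \E_{\tilde \mu_q}[\|s\|^2]$ for $\tilde\mu_q \propto \rho^q\,\pi^{\otimes k}$, and a Donsker--Varadhan variational representation combined with the sub-Gaussianity of $\|s\|^2$ under $\mu^{[k]}$ produces a bound of the form $q\sigma^2 + q\Renyi_q$, which closes a Gr\"onwall-type estimate and yields $\Renyi_q \lesssim \Otilde(dk^3 q^2/N^2)$. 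The lower bound $N = \widetilde \Omega(\sqrt d\, k^{3/2} q)$ is precisely what keeps us in the regime where this reduction applies.

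The main obstacle I expect is stage (iii): the $\psi^q$-reweighting in the R\'enyi Fisher information renders standard~\eqref{eq:lsi}- or~\eqref{eq:talagrand}-based dissipation arguments inapplicable, and the reduction from sub-Gaussianity to a R\'enyi bound demands careful $q$-tracking to achieve the sharp $q^2$ dependence---this is the locus of the ``analytic innovations'' mentioned in the introduction. Stage (i) is also delicate: closing the hierarchical Wasserstein recursion uniformly in $k$ is precisely what forces the strengthened Assumption~\ref{as:weak-interaction}, and would not go through under the weaker condition of~\cite{lacker2023hierarchies} or via the non-linear LSI of~\cite{ren2024size}.
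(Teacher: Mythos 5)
Your stages (i) and (ii) track the paper's route closely: the hierarchical $W_2$-Lipschitz estimate for $\mu^{k+1\mid[k]}(\cdot\mid x^{[k]})$ is exactly Lemma~\ref{lem:lipschitz-recursion}, and the upgrade via~\eqref{eq:lsi}/Herbst to sub-Gaussianity of $\|\nabla\log\rho_k\|$ under $\mu^{[k]}$ is Lemma~\ref{lem:zeta-subgsn-lacker}. (Small correction: the variance proxy of the \emph{fluctuation} is dimension-free, $\Otilde(k^3/N^2)$; the factor of $d$ in your $\sigma^2$ actually enters through the mean-level term $\E_{\mu^{[k]}}\|\nabla\log\rho_k\|^2 \asymp \FI(\mu^{[k]}\mmid\pi^{\otimes k})$, which is handled separately via Lemmas~\ref{lem:fisher-poc} and~\ref{lem:lacker-poc}.)

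Stage (iii) has the correct ingredients (tilting to $\mtt P_k \propto \rho_k^q\,\pi^{\otimes k}$ and Donsker--Varadhan) but misidentifies the closing mechanism, and the Langevin/Gr\"onwall wrapper is both unnecessary and problematic. The paper's argument is entirely static: Lemma~\ref{lem:renyi-lsi} already gives $\Renyi_q\le\frac{q\bar C_{\LSI}}{2}\RFI_q$ without any flow, and $\RFI_q$ is then controlled by a self-improving inequality. The point your sketch does not surface is that the Donsker--Varadhan remainder $\KL(\mtt P_k\mmid\mu^{[k]})$ is \emph{not} bounded by $\Renyi_q$; it is bounded by $\RFI_q$ itself via a second application of~\eqref{eq:lsi} (Lemma~\ref{lem:renyi-fisher-ii}: $\KL(\mtt P_k\mmid\mu^{[k]})\le\frac{(q-1)\bar C_{\LSI}}{2}\RFI_q$). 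Choosing $c\asymp q^2\bar C_{\LSI}$ in the definition of $\zeta_k$ makes the resulting coefficient of $\RFI_q$ on the right equal to $\frac{1}{2}$ regardless of $q$ or $\bar C_{\LSI}$, so it absorbs into the left side. Your proposed bound of the form ``$q\sigma^2+q\Renyi_q$'' would, after plugging into $\Renyi_q\le\frac{q\bar C_{\LSI}}{2}\RFI_q$, require smallness of $q^2\bar C_{\LSI}$ to close, which is not available under the stated hypotheses. Separately, even if the Gr\"onwall route did close, integrating $\partial_t\Renyi_q(\mu_t\mmid\pi^{\otimes k})=-\RFI_q(\mu_t\mmid\pi^{\otimes k})$ would require the sub-Gaussianity of $\nabla\log\rho_t$ for \emph{all} $t>0$, whereas stages (i)--(ii) only establish it at $t=0$ where $\mu_0=\mu^{[k]}$ has the explicit stationary form~\eqref{eq:finite-particle-stat}. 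The static absorption bypasses this entirely.
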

\begin{remark}
    The scaling $k^3$ is suboptimal and in particular does not recover a sensible result when $k/N \asymp 1$. There are also additional $\log k$ factors hidden by the $\widetilde O$-notation. However, the current proof does not provide an easy avenue to recover the optimal $k^2$ scaling.
\end{remark}

\begin{remark}
    The above also suffices to obtain bounds on $\norm{\frac{\D \mu^{[k]}}{\D \pi^{\otimes k}}}_{L^q(\pi^{\otimes k})} - 1$ when the $q$-R\'enyi is sufficiently small, via a Taylor expansion of the logarithm.
\end{remark}

\begin{remark}
    As a corollary, since it is possible to obtain sampling guarantees in R\'enyi divergence from $\mu^{1:N}$ via many of the results in~\citet{kook2024sampling}, we can also obtain samples $x^1$ which are close in $q$-R\'enyi to $\pi$ with complexities similar to those found in~\citet[Table 1]{kook2024sampling}. 
\end{remark}

We recall a standard change of measure lemma: for any event $A$ and two measures $\mu, \nu$, with $q > 1$,
\begin{align*}
    \mu(A) \leq \nu(A)^{1-\frac{1}{q}} \cdot \exp\Bigl(\frac{q-1}{q} \Renyi_q(\mu \mmid \nu) \Bigr)\,.
\end{align*}
This is a consequence of H\"older's inequality applied to the product of $\one_A$ and $\frac{\D \mu}{\D \nu}$ under the measure $\nu$. Using this, we can deduce the following corollary.
\begin{corollary}[Change of measure]\label{cor:change-measure}
    If $N = \widetilde \Omega(\sqrt{d} k^{3/2})$ and Assumptions~\ref{as:smoothness},~\ref{as:isoperimetry} and~\ref{as:weak-interaction} hold, then for any event $A$,
    \begin{align*}
        \mu^{[k]}(A) \leq 2\bigl[\pi^{\otimes k}(A)\bigr]^{1/2}\,.
    \end{align*}
\end{corollary}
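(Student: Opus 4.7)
The plan is to combine the change-of-measure lemma stated immediately above the corollary with Theorem~\ref{thm:main} specialized at $q=2$. Specifically, I would first apply the inequality
\begin{align*}
    \mu^{[k]}(A) \leq [\pi^{\otimes k}(A)]^{1-1/q} \cdot \exp\Bigl(\tfrac{q-1}{q} \Renyi_q(\mu^{[k]} \mmid \pi^{\otimes k}) \Bigr)
\end{align*}
with the choice $q = 2$, which is the natural choice because it produces the exponent $1/2$ appearing on the right-hand side of the corollary statement.

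Next, I would invoke Theorem~\ref{thm:main} at $q=2$, which under Assumptions~\ref{as:smoothness}, \ref{as:isoperimetry}, and~\ref{as:weak-interaction} yields
\begin{align*}
    \Renyi_2(\mu^{[k]} \mmid \pi^{\otimes k}) \lesssim \Otilde\Bigl(\frac{d k^3}{N^2}\Bigr)\,,
\end{align*}
provided $N = \widetilde \Omega(\sqrt{d} k^{3/2})$ with a sufficiently large implied constant (the theorem also requires $N = \widetilde \Omega(1 \vee \sqrt d k^{3/2} q)$, which reduces at fixed $q=2$ to precisely the hypothesis of the corollary).

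The final step is purely quantitative: by tuning the implied constant in the $\widetilde \Omega(\sqrt d k^{3/2})$ assumption to absorb the polylogarithmic factors hidden inside the $\Otilde$-notation, we can ensure $\tfrac{1}{2}\Renyi_2(\mu^{[k]} \mmid \pi^{\otimes k}) \leq \log 2$, and therefore $\exp(\tfrac{1}{2} \Renyi_2(\mu^{[k]} \mmid \pi^{\otimes k})) \leq 2$. Substituting back gives the claim. There is no real obstacle here beyond bookkeeping: the nontrivial content lives entirely in Theorem~\ref{thm:main}, and the corollary is a direct application of Hölder's inequality once that sharp R\'enyi bound is available.
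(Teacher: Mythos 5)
Your proposal is correct and matches the paper's intended argument exactly: the paper states the change-of-measure inequality immediately before the corollary precisely so that one can plug in $q=2$, invoke Theorem~\ref{thm:main}, and tune the implied constant in $N = \widetilde\Omega(\sqrt d k^{3/2})$ so that $\exp(\tfrac12 \Renyi_2(\mu^{[k]} \mmid \pi^{\otimes k})) \leq 2$.
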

\begin{remark}
    As a consequence, if there is a family of events $\{A_z\}_{z > 0}$ and $\phi: \R_+ \to \R_+$ for which $\pi^{\otimes k}(A_z) \leq \exp(-C \phi(z))$, then $\mu^{[k]}(A_z) \leq 2\exp(-\frac{C}{2} \phi(z))$, so that similar exponential behaviour is observed as $\phi(z) \to \infty$. The constant $C/2$ in the exponent can be improved to $C(1-\zeta)$ for $\zeta > 0$ arbitrarily small by increasing the R\'enyi order $q$. In the work~\cite{jackson2024concentration}, it was shown that, under different assumptions on the potential and interaction kernel, the empirical measure obtained with particles drawn from $\mu^{[N]}$ is $\varepsilon$-close to $\pi$ in Wasserstein-$p$ distances with high probability. 
    
    We can recover this result by first applying exponential concentration results for the empirical measure under $\pi^{\otimes k}$, using techniques from~\cite{fournier2015rate}, and then using Corollary~\ref{cor:change-measure} to guarantee, for $N$ large enough, the same result for the empirical measure under $\mu^{[k]}$. The final result is, up to the constant prefactor and the constant in the exponent, the same behaviour as verified in the setting of~\cite{jackson2024concentration}. Although such a result no longer holds when $k = N$, it is an interesting illustration of the utility of $\Renyi_q$ bounds.
\end{remark}

Before providing a proof of our main result, we first discuss some model examples where the results hold.

\section{Examples}\label{scn:examples}

In this section, we show that the assumptions in this paper hold for the family of examples considered in~\citet{kook2024sampling}.
The following comes from~\citet[Lemma 8]{kook2024sampling}.
\begin{example}[{Perturbations of strong convex potentials}]
    Suppose that the potentials can be decomposed as $V = V_0 + V_1$, $W = W_0 + W_1$, with $\msf{osc}(V_1), \msf{osc}(W_1) < \infty$, where for $U: \R^d \to \R$ we define $\msf{osc}(U) = \sup U - \inf U$. Suppose $V, W$ are $\alpha_V, \alpha_W$ uniformly convex with $\alpha_V + \min(\alpha_W, 0) > 0$. Suppose that $\beta_W \bar C_{\LSI} \lesssim 1$ for a sufficiently small implied constant, where $\bar C_{\LSI}$ is defined below:
    \begin{align*}
        \bar C_{\LSI} \lesssim \frac{1}{\alpha_{V_0} + \frac{N}{N-1} \alpha_{W_0}^-} \exp\Bigl(\msf{osc}(V_1) + \msf{osc}(W_1) \Bigr)\,.
    \end{align*}
    In this case, Assumption~\ref{as:isoperimetry} holds with the same $\bar C_{\LSI}$ as defined above.
\end{example}

\begin{remark}
    This implies as a special case, when $V_1 = W_1 = 0$, that the assumptions are satisfied when the potentials are strongly convex. However, the assumption is slightly more general, permitting very mild perturbations.
\end{remark}

The log-Sobolev constants for $\mu^{1:N}$ and $\pi$ were established in~\cite{kook2024sampling}. It remains to show the log-Sobolev inequality for $\mu^{k+1 \mid [k]}(\cdot \mid x^{[k]})$. Note that the proof of~\citet[Lemma 8]{kook2024sampling} only establishes this for $\mu^{k+1 \mid -(k+1)}$, and that this does not imply the desired property. However proof will be entirely analogous, as we see below.

We have, conditional on $x^{[k]}$, the following form for the measure
\begin{align*}
    \mu^{k+1:N \mid [k]}(x^{k+1:N} \mid x^{[k]}) &\propto \exp\Bigl(-\sum_{i=k+1}^N V(x^i) - \frac{1}{2(N-1)}\sum_{\substack{i,j = k+1 \\
    i \neq j}}^N W(x^i - x^j) \\
    &\hspace{180pt}- \frac{1}{N-1} \sum_{\substack{i = k+1 \\ j \in [k]}}^N W(x^i - x^j) \Bigr)\,.
\end{align*}
If we condition on all the particles except the $i$-th, for $i \geq k+1$, then we have
\begin{align*}
    \mu^{i \mid -i}(\cdot \mid x^{-i}) \propto \exp\Bigl(-V(\cdot) -\frac{1}{N-1} \sum_{j \in [N] \backslash i} W(\cdot - x^j) \Bigr)\,,
\end{align*}
from which it follows via Holley--Stroock that
\begin{align*}
    C_{\LSI}(\mu^{i \mid -i}(\cdot \mid x^{-i})) \leq \frac{1}{\alpha_{V_0} + \frac{N}{N-1} \alpha_{W_0}^-} \exp\Bigl(\msf{osc}(V_1) + \msf{osc}(W_1) \Bigr)\,,
\end{align*}
Conversely, for $i,j \geq k+1$, $i \neq j$, we have
\begin{align*}
    \norm{\nabla_{x^i} \nabla_{x^j} \log \mu^{k+1:N \mid [k]}(\cdot \mid x^{[k]})} \leq \frac{1}{N-1} \norm{\nabla^2 W} \leq \frac{\beta_W}{N-1}\,.
\end{align*}
Thus, appealing to Lemma~\ref{lem:otto-rez} and an identical proof as~\citet[Lemma 8]{kook2024sampling}, we can bound
\begin{align*}
    C_{\LSI}(\mu^{k+1:N \mid [k]}(x^{k+1:N} \mid x^{[k]})) \leq \bar C_{\LSI}\,,
\end{align*}
given in the example. Finally, we note that the log-Sobolev constant is preserved under marginalization, so this suffices to bound the log-Sobolev constants of any conditional measure. Thus, we can verify Assumption~\ref{as:isoperimetry} and Assumption~\ref{as:weak-interaction}.

In the discussion above, we had made use of the following lemma.

\begin{lemma}[{\citet[Theorem 1]{otto2007new}}]\label{lem:otto-rez}
    For a measure $\mu^{1:N}$ on $\R^{d \times N}$, assume that
    \begin{align*}
        C_{\msf{LSI}}(\mu^{i|-i}(\cdot\mid x^{-i})) &\leq \frac{1}{\tau_i}\,, \qquad \text{for all $i \in [N]$, $x^{-i} \in \R^{d \times (N-1)}$}\,, \\
        \norm{\nabla_{x^i} \nabla_{x^j} \log \mu^{1:N}(x^{1:N})} &\leq \beta_{i,j}\,, \qquad \text{for all $x^{1:N} \in \R^{d \times N}$, $i, j \in [N]$, $i \neq j$}\,.
    \end{align*}
    Then, consider the matrix $A \in \R^{N \times N}$ with entries $A_{i,i} = \tau_i$, $A_{i,j} = -\beta_{i,j}$ for $i \neq j$. If $A \succeq \zeta I_{N}$, then $\mu^{1:N}$ satisfies~\eqref{eq:lsi} with constant $1/\zeta$.
\end{lemma}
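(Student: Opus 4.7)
My plan is to prove this via the two-scale strategy of Otto and Reznikoff: decompose the entropy across coordinates using a chain rule, apply the conditional LSI to each block, and control the cross-contributions via the mixed derivative bounds assembled into the matrix $A$. The target is
$$\operatorname{ent}_\mu(f) \leq \frac{1}{2\zeta}\,\E_\mu[|\nabla f|^2]$$
for smooth nonnegative $f$, i.e.\ LSI with constant $1/\zeta$ in the paper's convention.

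First, for each $i \in [N]$, I would apply the coordinate-wise entropy chain rule
$$\operatorname{ent}_\mu(f) = \E_{\mu^{-i}}\bigl[\operatorname{ent}_{\mu^{i|-i}}(f)\bigr] + \operatorname{ent}_{\mu^{-i}}\bigl(\E_{\mu^{i|-i}}[f]\bigr)\,.$$
The first term is bounded by $(1/(2\tau_i))\,\E_\mu[|\nabla_i f|^2]$ via the conditional LSI hypothesis, producing the diagonal entry $\tau_i$ of $A$. The residual term is analyzed via the covariance identity
$$\nabla_j\, \E_{\mu^{i|-i}}[f] = \E_{\mu^{i|-i}}[\nabla_j f] + \Cov_{\mu^{i|-i}}\bigl(f,\ \nabla_j \log \mu^{1:N}\bigr)\,,$$
obtained by differentiating under the integral with respect to $x^j$, $j \neq i$. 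The covariance form emerges because the normalization of $\mu^{i|-i}$ contributes only a constant shift in $x^i$, which is absorbed into the covariance.

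Next, I would bound the covariance using the conditional Poincar\'e inequality (which is implied by conditional LSI). Since $\nabla_j \log \mu^{1:N}$ is $\beta_{i,j}$-Lipschitz in $x^i$ by the mixed-derivative assumption, its conditional variance under $\mu^{i|-i}$ is $\leq \beta_{i,j}^2/\tau_i$, so
$$\bigl|\Cov_{\mu^{i|-i}}(f, \nabla_j \log \mu^{1:N})\bigr| \lesssim (\beta_{i,j}/\sqrt{\tau_i})\, \sqrt{\Var_{\mu^{i|-i}}(f)}\,.$$
Assembling all contributions symmetrically across coordinates expresses the entropy as a quadratic form in the Dirichlet-form vector $v_i := \sqrt{\E_\mu[|\nabla_i f|^2]}$ with coefficient matrix exactly $A$. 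The hypothesis $A \succeq \zeta I$ then yields $\operatorname{ent}_\mu(f) \leq (1/(2\zeta))\,\sum_i v_i^2 = (1/(2\zeta))\,\E_\mu[|\nabla f|^2]$, as required.

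The main obstacle is the cross-term bookkeeping: since the marginal $\mu^{-i}$ does not in general inherit LSI from $\mu$, a naive iteration of the chain rule with a fixed coordinate ordering fails to produce the correct matrix structure. Otto and Reznikoff resolve this by treating all coordinates symmetrically and using the Poincar\'e covariance estimate to decouple the entropy in exactly the manner encoded by $A$---diagonal contributions match $\tau_i$ and off-diagonal contributions match $\beta_{i,j}$---so the matrix positivity $A \succeq \zeta I$ cleanly produces the LSI constant $1/\zeta$.
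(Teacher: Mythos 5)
The paper does not prove this lemma; it cites it directly from~\citet[Theorem~1]{otto2007new}, so there is no in-paper proof to compare against. Your sketch correctly identifies the main ingredients of the Otto--Reznikoff argument: the coordinate chain rule for entropy, the conditional LSI on the diagonal, the differentiation-under-the-integral covariance identity for $\nabla_j \E_{\mu^{i\mid -i}}[f]$, and the conditional Poincar\'e bound on that covariance using the Lipschitz constant $\beta_{i,j}$ of $\nabla_j \log \mu^{1:N}$ in $x^i$.

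However, there is a genuine gap in how you close the argument. After one application of the chain rule you face $\operatorname{ent}_{\mu^{-i}}\bigl(\E_{\mu^{i\mid-i}}[f]\bigr)$, and you correctly note that the marginal $\mu^{-i}$ does not automatically satisfy an LSI, so a naive iteration in a fixed order fails. You then assert that Otto--Reznikoff ``resolve this by treating all coordinates symmetrically,'' and that the entropy ``assembles'' into a quadratic form with matrix exactly $A$. This is not a resolution, because the chain rule is intrinsically asymmetric and ``treating symmetrically'' does not by itself produce an entropy bound for the outer term. The actual Otto--Reznikoff mechanism is inductive: one shows that the marginal $\mu^{-i}$ satisfies the same set of hypotheses as $\mu$, but with the matrix $A$ replaced by its Schur complement in the $i$-th block (the covariance identity and Poincar\'e estimate are used precisely to show the marginal's conditional Hessian cross-terms are controlled by the Schur-complemented coefficients). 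Since the Schur complement of a matrix $\succeq \zeta I$ is again $\succeq \zeta I$, the induction closes and the minimum eigenvalue propagates. Without this Schur-complement step, the claim that the entropy decomposes into a quadratic form with coefficient matrix $A$ remains unsubstantiated; the covariance identity only bounds gradients of $\bar f = \E_{\mu^{i\mid-i}}[f]$, not the entropy of $\bar f$ under $\mu^{-i}$.
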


Since the above permits Gaussian distributions as examples, we perform some explicit calculations to demonstrate where our bounds are tight and where they could yet be improved.
\begin{example}[Explicit computations for Gaussians]
    Consider the following confinement and interaction for $\lambda > 0$,
    \begin{align*}
        V(x) = \frac{1}{2}\norm{x}^2\,, \quad W(x) = \frac{\lambda}{2} \norm{x}^2\,.
    \end{align*}
    Then the stationary distributions are Gaussians, and we can compute $\Renyi_q(\mu^{1:k} \mmid \pi^{\otimes k}) = \widetilde \Omega(\tfrac{dk^2q}{N^2})$ for $N$ sufficiently large, which shows that our upper bound is tight with respect to the $N$ and $d$ scaling, assuming we keep $k, q$ relatively constant.
\end{example}

The same calculations as~\citet[Example 10]{kook2024sampling} give, for $\msf C \in \R^{N \times N}$ with $C_{i,i} = N-1$ and $C_{i,j} = -1$ if $i \neq j$, or in other words $\msf C = N I_N - J$, with $J = \one_N\!\one_N^\top$, $\one_k$ being the length $k$ vector with all entries equal to $1$,
\begin{align*}
    \mu^{1:N} = \mc N\Bigl(0, \bigl(I_{d \times N} + \frac{\lambda}{N-1} \msf C \otimes I_d\bigr)^{-1}\Bigr)\,, \quad \pi = \mc N\Bigl(0, \frac{I_d}{1+\lambda}\Bigr)\,.
\end{align*}
Let us fix $k \ll N$ and write $\rho = 1+\frac{\lambda N}{N-1}$. Then, we can compute that $\mu^{1:k}$, which is a mean-zero Gaussian with covariance given by the upper $(dk \times dk)$ block of the covariance of $\mu^{1:N}$, has the form
\begin{align*}
    \mu^{1:k} = \mc N\Bigl(0, \underset{\Sigma_1}{\underbrace{\Bigl(\frac{1}{\rho} I_k +\frac{\lambda}{\rho(N-1)} \one_k \!\one_k^\top \Bigr) \otimes I_d}} \Bigr)\,.
\end{align*}
On the other hand, $\pi^{\otimes k}$ remains a simple tensor product, with covariance denoted by $\Sigma_2 \deq I_k \otimes \frac{1}{1+\lambda} I_d$.

The R\'enyi divergence with order $q$ between two mean-zero Gaussians is given by
\begin{align*}
    \Renyi_q(\mu^{1:k} \mmid \pi^{\otimes k}) = \frac{1}{2(q-1)} \Bigl[ -q \log\det \Sigma_1 -(1-q) \log \det \Sigma_2 - \log \det \bigl(q\Sigma_1^{-1} +(1-q)\Sigma_2^{-1}\bigr)\Bigr]\,.
\end{align*}
This exists iff $q \Sigma_1^{-1} + (1-q) \Sigma_2^{-1} \succ 0$. From the eigenvalues (noting that $\Sigma_2$ is a multiple of the identity in our case), we require
\begin{align*}
    \frac{\rho}{1+\frac{\lambda k}{N-1}} > \bigl(1-\frac{1}{q} \bigr)\bigl(1+\lambda \bigr)\,.
\end{align*}
After some rearranging, we find that it suffices if
\begin{align*}
    N > 1 + \lambda k(q-1)-\frac{\lambda q}{1+\lambda}\,,
\end{align*}
is sufficiently large. In such a case, noting that $\Sigma_1^{-1}$ has the eigenvalue $\rho$ with multiplicity $d(k-1)$ and $\frac{\rho}{1+\frac{\lambda k}{N-1}}$ with multiplicity $d$, the R\'enyi reduces to
\begin{align*}
    \Renyi_q(\mu^{1:k} \mmid \pi^{\otimes k}) &= \frac{d}{2(q-1)} \Bigl[qk \log \rho -q \log \bigl(1+\frac{\lambda k}{N-1}\bigr) + (1-q) k \log(1+\lambda) \\
    &\hspace{40pt} - (k-1)\log \bigl((1-q)(1+\lambda) + \rho q\bigr) -\log\Bigl((1-q)(1+\lambda)+\frac{\rho q}{1+\frac{\lambda k}{N-1}} \Bigr) \Bigr]\,.
\end{align*}
Now, keeping $\lambda, k, q$ fixed and taking the Taylor expansion of the various functions with respect to $N \to \infty$, some rather tedious computations give, for fixed $q, \lambda$ and $k$,
\begin{align*}
    \Renyi_q(\mu^{1:k} \mmid \pi^{\otimes k}) = \frac{dq\lambda^2}{4(1+\lambda)^2} \cdot \frac{k(k(1+\lambda)^2 - (2\lambda + 1))}{N^2} + O(dN^{-3})\,.
\end{align*}
This shows the $O(d/N^2)$ scaling, although the scaling is only linear in $q$ and quadratic in $k$, when compared with Theorem~\ref{thm:main}.

\section{Proofs}
We start with our main theorem. As the proof will involve several intermediate lemmas, we also provide a flow chart in Figure~\ref{fig:proof-roadmap} for the reader's ease of reference.

\begin{figure}[t]
\centering
\begin{tikzpicture}[
  x=1cm, y=1cm,
  font=\small,
  box/.style={draw, rounded corners, align=left, inner sep=6pt},
  arrow/.style={-Latex, line width=0.6pt},
  darrow/.style={-Latex, dashed, line width=0.6pt}
  group/.style={draw, rounded corners, inner sep=10pt}
]

\node[box] (L5)  at (0,4.3) {\textbf{Lemma~\ref{lem:renyi-lsi}}\\
$\Renyi_q \le \dfrac{q \bar C_{\LSI}}{2}\,\RFI_q$};

\node[box] (DV)  at (0,2.6) {\textbf{Donsker--Varadhan}\\
$\RFI_q \le \textcolor{purple}{\mathrm{(I)}} + \textcolor{teal}{\mathrm{(II)}} + \textcolor{olive}{\mathrm{(III)}}$};

\node[box] (L6)  at (0,0) {\textbf{Lemma~\ref{lem:renyi-fisher-ii}}\\
$\textcolor{teal}{\mathrm{(II)}}\le \dfrac{q^2\bar C_{\LSI}}{2c}\,\RFI_q$\\[2pt]
Choose $c \asymp q^2 \bar C_{\LSI}$.};

\node[box] (L7)  at (-5.2,0) {\textbf{Lemma~\ref{lem:fisher-poc}}\\
$\textcolor{purple}{\mathrm{(I)}}= \widetilde{\mathcal O}\left(\dfrac{dk^3q}{\bar C_{\LSI} N^2}\right)$};

\node[box] (L8)  at (-5.2,2.9) {\textbf{Lemma~\ref{lem:lacker-poc}}\\
KL propagation of chaos};

\node[box] (L9)  at (5.6,0) {\textbf{Lemma~\ref{lem:zeta-subgsn-lacker}}\\
$\textcolor{olive}{\mathrm{(III)}}= \widetilde{\mathcal O}\left(\dfrac{dk^3 q^3}{cN^2} \right)$};

\node[box] (L1011) at (5.6,2.9) {\textbf{Lemmas~\ref{lem:subgsn-conc} and~\ref{lem:lipschitz-recursion}}\\
Lipschitzness/sub-Gaussianity \\
of $W_2\big(\mu^{k+1\mid[k]}(\cdot\mid X^{[k]}),\,\pi\big)$.};

\node[box] (Thm1) at (0,-3.2) {\textbf{Theorem~\ref{thm:main}} \\
$\Renyi_q = \widetilde{\mathcal O}\!\left(\dfrac{dk^3q^2}{N^2}\right)$.};

\draw[arrow] (L5) -- (DV);

\draw[arrow] (DV) -- (L6);

\draw[arrow] (DV) to (L7.north east);
\draw[arrow] (DV) to (L9.north west);

\draw[arrow] (L8) -- (L7);

\draw[arrow] (L6) -- (Thm1);
\draw[arrow] (L7) to (Thm1.north west);
\draw[arrow] (L9) to (Thm1.north east);
\draw[arrow] (L1011.south) -| (L9.north);

\end{tikzpicture}
\caption{Proof roadmap, omitting the arguments in $\Renyi_q$ and $\RFI_q$ for brevity.}
\label{fig:proof-roadmap}
\end{figure}
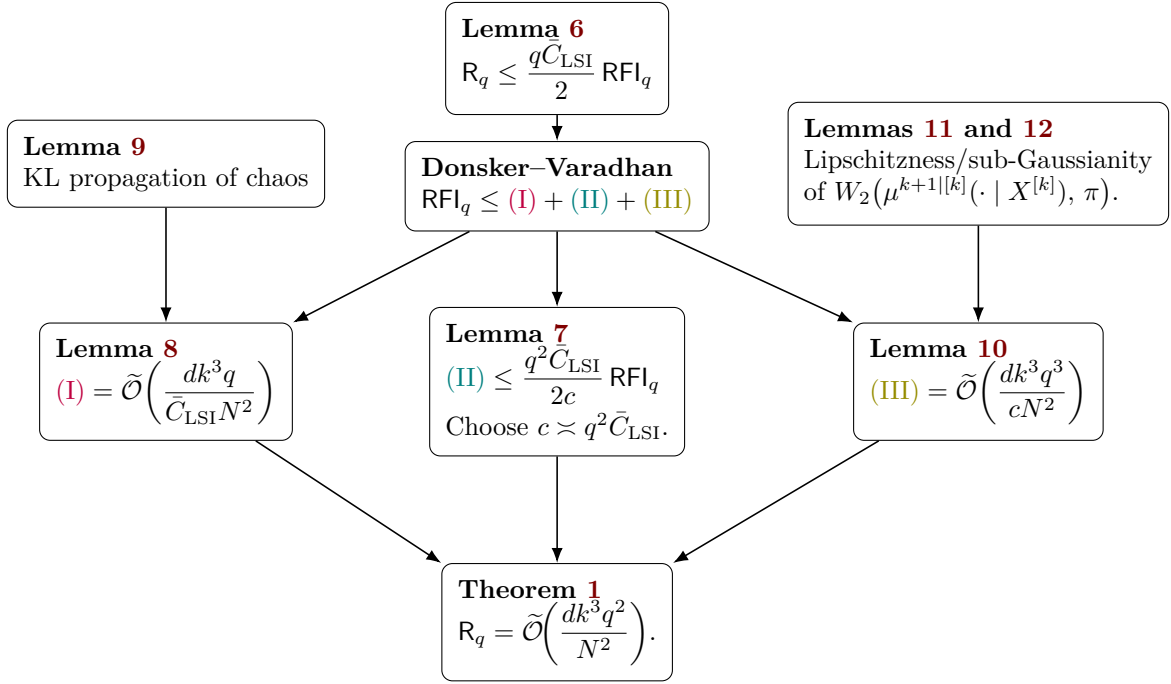

\begin{proof-of-theorem}[{\ref{thm:main}.}]
First, marginalizing over the coordinates $[N] \backslash [k]$, the marginal measures can be presented as follows:
\begin{align*}
    \log \mu^{[k]}(x^{[k]}) \propto \log \int \exp\Bigl(-\sum_{i=1}^k V(x^i) - \frac{1}{2(N-1)}\sum_{\substack{i \in [N] \\
    j \in [N] \backslash i}} W(x^i - x^j)\Bigr) \, \D x^{k+1:N} + \operatorname{const}\,.
\end{align*}
Thus, we have
\begin{align}\label{eq:marginal-score}
\begin{aligned}
    &-\nabla_{x^i} \log \mu^{[k]}(X^{[k]}) \\
    &\qquad = \nabla V(X^i) + \frac{1}{N-1} \sum_{\substack{j \in[k] \\ j \neq i}}\nabla W(X^i - X^j) + \frac{N-k}{N-1} \E_{z \sim \mu^{k+1 \mid [k]}(\cdot \mid X^{[k]})} \nabla W(X^i - z)\,.
\end{aligned}
\end{align}
Introduce the ratios $\rho_k \deq \frac{\mu^{[k]}}{\pi^{\otimes k}}$. Fix an exponent $q \geq 2$. Assuming that $\rho_k^q$ is integrable under $\pi$ (which occurs under the assumptions), then we can define the sequence of tilted expectations and measures as, for any random variable $Z$ on $\R^{d \times k}$ and any Borel set $\msf A \in \mc B(\R^{d \times k})$
\begin{align}\label{eq:twisted-measure}
    \mtt E_k[Z] \deq \E_{\pi^{\otimes k}}\Bigl[\frac{\rho_k^q Z}{\E_{\pi^{\otimes k}}[\rho_k^q]}\Bigr]\,, \qquad \mtt P_k(\cdot \in \msf A) = \int_{\msf A} \frac{\rho_k^q(\cdot)}{\E_{\pi^{\otimes k}}[\rho_k^q]} \, \D \pi^{\otimes k}(\cdot)\,.
\end{align}
Assuming $\pi$ satisfies~\eqref{eq:lsi} with constant $\bar C_{\LSI}$, we have via Lemma~\ref{lem:renyi-lsi}
\begin{align*}
    \Renyi_q(\mu^{[k]} \mmid \pi^{\otimes k}) &\leq \frac{q\bar C_{\LSI}}{2}  \RFI_q(\mu^{[k]} \mmid \pi^{\otimes k}) = \frac{q^2 \bar C_{\LSI}}{2} \mtt E_k[\norm{\nabla \log \rho_k}^2]\,.
\end{align*}
Introduce the variable
\begin{align}\label{eq:zeta-def}
    \zeta_k \deq c(\norm{\nabla \log \rho_k} -\E_{\mu^{[k]}}\norm{\nabla \log \rho_k})^2\,,
\end{align}
for a parameter $c > 0$. We now have, via the Donsker--Varadhan variational principle applied to the test variable $\zeta_k$,
\begin{align*}
    \mtt E_k[\norm{\nabla \log \rho_k}^2] &\leq 2 \E_{\mu^{[k]}}[\norm{\nabla \log \rho_k}^2] + \frac{2}{c} \mtt E_k[\zeta_k] \\
    &\leq 2\FI(\mu^{[k]} \mmid \pi^{\otimes k}) + \frac{2}{c}\Bigl\{\KL(\mtt P_k \mmid \mu^{[k]}) + \log \E_{\mu^{[k]}} \exp(\zeta_k) \Bigr\}\,.
\end{align*}
Using Lemma~\ref{lem:renyi-fisher-ii} we can bound again
\begin{align*}
    \KL(\mtt P_k \mmid \mu^{[k]}) \leq \frac{(q-1) \bar C_{\LSI}}{2} \RFI_q(\mu^{[k]} \mmid \pi^{\otimes k})\,.
\end{align*}
Choose $c = 2q^2 \bar C_{\LSI}$. We can now bound
\begin{align*}
    \RFI_q(\mu^{[k]} \mmid \pi^{\otimes k}) \leq \frac{1}{2} \RFI_q(\mu^{[k]} \mmid \pi^{\otimes k}) + 2q \FI(\mu^{[k]} \mmid \pi^{\otimes k}) + \frac{1}{q \bar C_{\LSI}} \log \E \exp(\zeta_k)\,.
\end{align*}
Rearranging, we have
\begin{align*}
    \RFI_q(\mu^{[k]} \mmid \pi^{\otimes k}) \leq 4 q \FI(\mu^{[k]} \mmid \pi^{\otimes k}) + \frac{2}{q \bar C_{\LSI}} \log \E \exp(\zeta_k)\,.
\end{align*}
These terms are controlled through Lemma~\ref{lem:fisher-poc} and Lemma~\ref{lem:zeta-subgsn-lacker} respectively. This gives our final result, which is
\begin{align*}
    \Renyi_q(\mu^{[k]} \mmid \pi^{\otimes k}) \lesssim \Otilde\Bigl(\frac{dk^3 q^2}{N^2}\Bigr)\,.
\end{align*}
\end{proof-of-theorem}

We now prove the intermediate results used to develop the main theorem. The next two lemmas largely follow from elementary calculations.
\begin{lemma}[{Adapted from~\citet[Lemma 5]{VempalaW19}}]\label{lem:renyi-lsi}
    If a measure $\nu$ satisfies~\eqref{eq:lsi} with constant $C_{\LSI}$, then for any $\mu \in \mc P(\R^d)$,
    \begin{align*}
        \Renyi_q(\mu \mmid \nu) \leq \frac{q C_{\LSI}}{2} \RFI_q(\mu \mmid \nu)\,.
    \end{align*}
\end{lemma}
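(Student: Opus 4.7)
The plan is to apply the log-Sobolev inequality with the test function $f = \rho^{q/2}$, where $\rho \deq \D\mu/\D\nu$, and then translate the resulting entropy inequality for $\rho^q$ into one involving $\Renyi_q$ by differentiating in the order $q$. This is essentially the Vempala--Wibisono approach that the paper references.

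First I would set $G(q) \deq \E_\nu[\rho^q]$, so that $(q-1)\Renyi_q(\mu \mmid \nu) = \log G(q)$ and $G'(q) = \E_\nu[\rho^q \log \rho]$. A direct computation then gives
\begin{align*}
    \operatorname{ent}_\nu(\rho^q) = \E_\nu[\rho^q \log \rho^q] - G(q) \log G(q) = q\, G'(q) - G(q) \log G(q)\,.
\end{align*}
Next, applying~\eqref{eq:lsi} (in the standard ``squared'' form) to $f = \rho^{q/2}$ yields
\begin{align*}
    \operatorname{ent}_\nu(\rho^q) \leq 2 C_{\LSI}\, \E_\nu\bigl[\norm{\nabla \rho^{q/2}}^2\bigr] = \frac{q C_{\LSI}}{2}\, G(q)\, \RFI_q(\mu \mmid \nu)\,,
\end{align*}
where the last equality is just the definition of $\RFI_q$ given in the preliminaries.

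Dividing by $G(q)$ and using the identity
\begin{align*}
    \frac{G'(q)}{G(q)} = \frac{\D}{\D q} \log G(q) = \frac{\D}{\D q}\bigl[(q-1) \Renyi_q\bigr] = \Renyi_q + (q-1)\, \Renyi_q'\,,
\end{align*}
I would obtain the differential inequality
\begin{align*}
    \Renyi_q + q(q-1)\, \Renyi_q' \leq \frac{q C_{\LSI}}{2}\, \RFI_q(\mu \mmid \nu)\,.
\end{align*}
The conclusion then follows by dropping the nonnegative term $q(q-1)\, \Renyi_q'$, which is justified by the standard monotonicity $q \mapsto \Renyi_q(\mu \mmid \nu)$ for $q \geq 1$.

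There is no serious obstacle here: the only subtle point is the derivative bookkeeping linking $\operatorname{ent}_\nu(\rho^q)/G(q)$ to $\Renyi_q$ and $\Renyi_q'$, and one should also check a mild integrability/smoothness hypothesis on $\rho$ so that the LSI applies to $\rho^{q/2}$ and $G$ is $C^1$ at the relevant $q$ (both are implicit in the theorem's assumptions). The special case $q = 1$ recovers the classical $\KL \leq \tfrac{C_{\LSI}}{2}\FI$ implication of~\eqref{eq:lsi}.
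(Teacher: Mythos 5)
Your proposal is correct and is essentially the same argument as the paper's: apply the (squared) log-Sobolev inequality to $f = \rho^{q/2}$, identify $q G'(q) - G(q)\log G(q)$ with $\operatorname{ent}_\nu(\rho^q)$, rewrite $\log G(q)/G'(q)$ in terms of $\Renyi_q$ and $\partial_q \Renyi_q$, and drop the nonnegative derivative term using the monotonicity of $q \mapsto \Renyi_q$. The only difference is cosmetic: you introduce $G(q)$ as a named function and reorganize the bookkeeping slightly, whereas the paper carries the integrals $\int \rho^q \,\D\nu$ explicitly.
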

\begin{proof}
The following derivation comes from~\citet[Theorem 2.2.25]{chewibook}. We have, applying~\eqref{eq:lsi} to the test function $f = \rho^{q/2}$, for $\rho = \frac{\D \mu}{\D \nu}$ with $\mathscr E(f, f) = \E_\nu \norm{\nabla f}^2$ the Dirichlet form corresponding to $\nu$,
\begin{align*}
    2 C_{\LSI} \mathscr E(\rho^{q/2}, \rho^{q/2}) &\geq q \int \rho^q \log \rho \, \D \nu - \Bigl(\int \rho^q \, \D \nu \Bigr) \log \Bigl(\int \rho^q \, \D \nu \Bigr) \\
    &= q \partial_q \int \rho^q \, \D \nu - \Bigl( \int \rho^q \, \D \nu\Bigr)\log \Bigl(\int \rho^q \, \D \nu \Bigr) \,. 
\end{align*}
Thus,
\begin{align*}
    \frac{4}{q} \frac{\mathscr E(\rho^{q/2}, \rho^{q/2})}{\int \rho^q \, \D \nu} &\geq \frac{2}{C_{\LSI}} \Bigl\{ \partial_q \log \int \rho^q \, \D \nu - \frac{1}{q}\log \Bigl(\int \rho^q \, \D \nu \Bigr)\Bigr\} \\
    &= \frac{2}{C_{\LSI}} \partial_q [(q-1) \Renyi_q(\mu \mmid \nu)] - \frac{2(q-1)}{q C_{\LSI}} \Renyi_q(\mu \mmid \nu) \\
    &=  \frac{2}{q C_{\LSI}} \Renyi_q(\mu \mmid \nu) + \frac{2(q-1)}{C_{\LSI}} \partial_q \Renyi_q(\mu \mmid \nu)\,.
\end{align*}
The conclusion follows, noting that $\Renyi_q(\cdot \mmid \cdot)$ is increasing in $q$.
\end{proof}
\begin{lemma}[{Adapted from~\citet[Proof of Lemma 19]{chewi2025analysis}}]\label{lem:renyi-fisher-ii}
    If $\pi^{\otimes k}$ satisfies a log-Sobolev inequality with constant $\bar C_{\LSI}$, we have for $q \geq 2$ and any $\mu^{[k]} \in \mc P(\R^{d \times k})$, with $\mtt P_k$ defined as in~\eqref{eq:twisted-measure},
     \begin{align*}
        \KL(\mtt P_k \mmid \mu^{[k]}) \leq \frac{(q-1) \bar C_{\LSI}}{2} \RFI_q(\mu^{[k]} \mmid \pi^{\otimes k})\,.
    \end{align*}
\end{lemma}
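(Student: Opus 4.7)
The plan is to compute $\KL(\mtt P_k \mmid \mu^{[k]})$ explicitly from the Radon--Nikodym derivative, rewrite it as a carré-du-champ--style entropy of $g \deq \rho_k^{q/2}$ under $\pi^{\otimes k}$, and then invoke the log-Sobolev inequality. Since $\D\mu^{[k]}/\D\pi^{\otimes k} = \rho_k$ and $\D\mtt P_k/\D\pi^{\otimes k} = \rho_k^q/Z$ with $Z \deq \E_{\pi^{\otimes k}}[\rho_k^q]$, the chain rule gives $\D\mtt P_k/\D\mu^{[k]} = \rho_k^{q-1}/Z$. Taking logarithms and pushing expectations back to $\pi^{\otimes k}$ yields
\begin{align*}
    \KL(\mtt P_k \mmid \mu^{[k]}) = \frac{(q-1)\,\E_{\pi^{\otimes k}}[\rho_k^q \log \rho_k]}{\E_{\pi^{\otimes k}}[\rho_k^q]} - \log \E_{\pi^{\otimes k}}[\rho_k^q]\,.
\end{align*}

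Next I would substitute $g = \rho_k^{q/2}$, which is admissible as a test function because $q\geq 2$. Using $\rho_k^q = g^2$ and $\rho_k^q \log \rho_k = \tfrac{1}{q} g^2 \log g^2$, the identity above rearranges to
\begin{align*}
    \KL(\mtt P_k \mmid \mu^{[k]}) = \frac{q-1}{q}\,\frac{\operatorname{ent}_{\pi^{\otimes k}}(g^2)}{\E_{\pi^{\otimes k}}[g^2]} - \frac{1}{q} \log \E_{\pi^{\otimes k}}[g^2]\,.
\end{align*}
By Jensen, $\E_{\pi^{\otimes k}}[\rho_k^q] \geq (\E_{\pi^{\otimes k}}\rho_k)^q = 1$, so the last term is non-positive and can be dropped without cost.

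Finally, apply the LSI for $\pi^{\otimes k}$ in the form $\operatorname{ent}_{\pi^{\otimes k}}(g^2) \leq 2\bar C_{\LSI}\,\E_{\pi^{\otimes k}}[\norm{\nabla g}^2]$ (the same form used in the proof of Lemma~\ref{lem:renyi-lsi}), which yields
\begin{align*}
    \KL(\mtt P_k \mmid \mu^{[k]}) \leq \frac{2(q-1)\bar C_{\LSI}}{q}\,\frac{\E_{\pi^{\otimes k}}[\norm{\nabla g}^2]}{\E_{\pi^{\otimes k}}[g^2]} = \frac{(q-1)\bar C_{\LSI}}{2}\,\RFI_q(\mu^{[k]} \mmid \pi^{\otimes k})\,,
\end{align*}
where the last equality is just the definition of $\RFI_q$ from Section~2. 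The argument is almost entirely algebraic; the only real conceptual step is recognizing that the Rényi-tilted KL naturally expresses itself as $\operatorname{ent}_{\pi^{\otimes k}}(g^2)/\E_{\pi^{\otimes k}}[g^2]$ (plus a sign-favorable Rényi term), which is precisely the object dominated by the log-Sobolev inequality on $\pi^{\otimes k}$. I do not foresee any significant obstacle.
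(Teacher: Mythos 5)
Your proof is correct and is essentially the same argument as the paper's: both compute $\KL(\mtt P_k \mmid \mu^{[k]})$ from the Radon--Nikodym derivative, drop the non-positive $-\tfrac{1}{q}\log\E_{\pi^{\otimes k}}[\rho_k^q]$ term (you via Jensen, the paper via noting it equals $-\tfrac{q-1}{q}\Renyi_q$), and then apply the log-Sobolev inequality to $g=\rho_k^{q/2}$. Your $\operatorname{ent}_{\pi^{\otimes k}}(g^2)/\E_{\pi^{\otimes k}}[g^2]$ is exactly the paper's intermediate quantity $\KL(\mtt P_k \mmid \pi^{\otimes k})$, so the two derivations coincide up to notation.
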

\begin{proof}
    This follows the derivation in~\cite[Theorem 6.1.2]{chewibook}. Let $\psi_k^q = \frac{\rho_k^{q-1}}{\E_{\pi^{\otimes k}}[\rho_k^{q}]}$. Then
    \begin{align*}
        \KL(\mathtt P_k \mmid \mu^{[k]}) &= \E_{\psi_k^q \mu^{[k]}} \log \psi_k^q = \E_{\psi_k^q \mu^{[k]}} \log \frac{\rho_k^{q-1}}{\E_{\mu^{[k]}} \rho_k^{q-1}} = \frac{q-1}{q} \E_{\psi_k^q \mu^{[k]}} \log \frac{\rho_k^{q}}{\E_{\mu^{[k]}}[\rho_k^{q-1}]^{q/(q-1)}} \\
        &=\frac{q-1}{q}\Bigl\{\E_{\psi_k^q \mu^{[k]}} \log \frac{\rho_k^q}{\E_{\mu^{[k]}} \rho_k^{q-1}} -\underset{(*)}{\underbrace{\frac{1}{q-1} \log \E_{\mu^{[k]}}[\rho_k^{q-1}]}}\Bigr\} \\
        &\leq \frac{q-1}{q} \KL(\psi_k^q \mu^{[k]} \mmid \pi^{\otimes k}) \\
        &\leq \frac{(q-1) \bar C_{\LSI}}{2q} \E_{\psi_k^q \mu^{[k]}}\bigl[ \norm{\nabla \log \rho_k^q}^2\bigr] = \frac{(q-1) \bar C_{\LSI}}{2} \RFI_q(\mu^{[k]} \mmid \pi^{\otimes k})\,.
    \end{align*}
    The term $(*) \geq 0$ as it is equal to $\Renyi_q(\mu^{[k]} \mmid \pi^{\otimes k})$.
\end{proof}

The next lemma, for bounding the Fisher information, is adapted from the intermediate steps of the proof of~\citet[Theorem 3]{kook2024sampling}. Unlike the proof in said paper, we will not be concerned about absolute constants, as we will not need to derive a recursive structure. Instead we will rely upon~\citet[Theorem 3]{kook2024sampling} to provide the propagation of chaos results.
\begin{lemma}\label{lem:fisher-poc}
    Under Assumptions~\ref{as:smoothness},~\ref{as:isoperimetry} and \ref{as:weak-interaction}, for $N \gtrsim 1$ sufficiently large and $k \in [N]$, it holds that
    \begin{align*}
        \bar C_{\LSI} \FI(\mu^{[k]} \mmid \pi^{\otimes k}) = \widetilde O\Bigl( \frac{d k^3}{N^2}\Bigr)\,.
    \end{align*}
\end{lemma}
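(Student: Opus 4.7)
The plan is to start from the explicit marginal score in~\eqref{eq:marginal-score} and subtract $-\nabla_{x^i}\log\pi^{\otimes k}(X^{[k]}) = \nabla V(X^i) + F(X^i)$, where $F(x) \deq \E_{z\sim\pi}\nabla W(x-z)$. Using the identity $(k-1)+(N-k) = N-1$ to redistribute $F(X^i)$ between the two remaining contributions, I decompose, for each $i \in [k]$,
\begin{align*}
    -\nabla_{x^i}\log\frac{\mu^{[k]}}{\pi^{\otimes k}} = \underbrace{\frac{1}{N-1}\sum_{j \in [k], j \neq i}\bigl[\nabla W(X^i - X^j) - F(X^i)\bigr]}_{T_1^{(i)}} + \underbrace{\frac{N-k}{N-1}\bigl[\E_{z \sim \mu^{k+1\mid [k]}(\cdot \mid X^{[k]})}\nabla W(X^i - z) - F(X^i)\bigr]}_{T_2^{(i)}}\,.
\end{align*}
Since $\FI(\mu^{[k]}\mmid\pi^{\otimes k}) = \sum_{i=1}^k \E_{\mu^{[k]}}\norm{\nabla_{x^i}\log(\mu^{[k]}/\pi^{\otimes k})}^2$, it suffices to bound $\E\norm{T_\ell^{(i)}}^2$ for $\ell \in \{1,2\}$ and sum.

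For the mean-field tail $T_2^{(i)}$, $\beta_W$-smoothness of $\nabla W$ gives the pointwise bound $\norm{T_2^{(i)}} \leq \beta_W\, W_2(\mu^{k+1\mid[k]}(\cdot\mid X^{[k]}),\pi)$. Taking expectations, applying~\eqref{eq:talagrand} fibrewise (using the uniform~\eqref{eq:lsi} from Assumption~\ref{as:isoperimetry}), and using the chain-rule identity $\E_{\mu^{[k]}}\KL(\mu^{k+1\mid[k]}(\cdot\mid X^{[k]}) \mmid \pi) = \KL(\mu^{[k+1]} \mmid \mu^{[k]} \otimes \pi) \leq \KL(\mu^{[k+1]} \mmid \pi^{\otimes(k+1)})$, I invoke the KL entropic hierarchy of Lemma~\ref{lem:lacker-poc} to obtain $\E\norm{T_2^{(i)}}^2 \lesssim \beta_W^2\,\bar C_{\LSI}\,\Otilde(k^2/N^2)$.

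For the finite-sum term $T_1^{(i)}$, Jensen's inequality yields $\norm{T_1^{(i)}}^2 \leq \frac{k-1}{(N-1)^2}\sum_{j \neq i}\norm{\nabla W(X^i - X^j) - F(X^i)}^2$. For each pair $(i,j)$, I condition on $X^i$ and apply the variance--bias decomposition
\begin{align*}
    \E\norm{\nabla W(X^i - X^j) - F(X^i)}^2 = \E\,\Var_{\mu^{j\mid i}}\bigl[\nabla W(X^i - X^j)\bigr] + \E\norm{\E_{\mu^{j\mid i}}\nabla W(X^i - X^j) - F(X^i)}^2\,.
\end{align*}
The variance term is controlled coordinate-wise via the Poincar\'e inequality implied by~\eqref{eq:lsi} on $\mu^{j\mid i}(\cdot\mid X^i)$: since the Jacobian of $y \mapsto \nabla W(X^i - y)$ has operator norm $\leq \beta_W$ (hence Frobenius norm $\leq \sqrt d\,\beta_W$), summing $d$ scalar Poincar\'e inequalities yields a bound $\lesssim \bar C_{\LSI}\beta_W^2 d$. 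The bias term is bounded by $\beta_W^2\,\E W_2^2(\mu^{j\mid i}(\cdot\mid X^i), \pi) \lesssim \bar C_{\LSI}\beta_W^2\,\KL(\mu^{[2]}\mmid\pi^{\otimes 2}) = \Otilde(\bar C_{\LSI}\beta_W^2/N^2)$, which is dominated by the variance term for $N$ large. Summing over the $k-1$ pairs gives $\E\norm{T_1^{(i)}}^2 \lesssim (k-1)^2\,\bar C_{\LSI}\beta_W^2 d/N^2$.

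Finally, summing both bounds over $i \in [k]$, the $T_1$ contribution dominates thanks to its extra factor of $d$, producing $\FI(\mu^{[k]}\mmid\pi^{\otimes k}) \lesssim \Otilde(\bar C_{\LSI}\beta_W^2 d\,k^3/N^2)$. Multiplying by $\bar C_{\LSI}$ and absorbing $\beta_W^2\bar C_{\LSI}^2 \lesssim 1$ via the very weak interaction Assumption~\ref{as:weak-interaction} yields the claimed $\bar C_{\LSI}\FI(\mu^{[k]}\mmid\pi^{\otimes k}) = \Otilde(dk^3/N^2)$. The main technical subtlety is the vector-valued Poincar\'e step that cleanly produces the $d$ factor in the pairwise variance; a secondary point of care is invoking Lemma~\ref{lem:lacker-poc} at level $k+1$ (rather than $k$) so that the chain-rule inequality above is valid when controlling $T_2^{(i)}$.
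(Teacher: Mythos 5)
Your proof is correct and reaches the same $\Otilde(dk^3/N^2)$ bound, but via a genuinely different argument for the finite-sum ($T_1$/``$\msf A$'') term. The paper keeps the full squared norm of the sum, expands it into diagonal and off-diagonal (cross-covariance) terms, uses exchangeability to reduce each to a representative pair/triple, and controls the cross terms via Cauchy--Schwarz and a tower rule on the third particle together with $W_2(\mu^{3\mid[2]},\pi)$. You instead apply Jensen's inequality upfront to decouple the $k-1$ summands at the cost of a multiplicative $(k-1)$, then split each $\E\|\nabla W(X^i-X^j) - F(X^i)\|^2$ into a conditional variance and a squared bias; the variance is controlled by a vector-valued Poincar\'e inequality for $\mu^{j\mid i}$ (each scalar coordinate gets a Poincar\'e bound, and the Frobenius norm of the Jacobian yields the $\sqrt{d}\,\beta_W$ factor), while the bias is a $W_2(\mu^{j\mid i},\pi)$ term controlled exactly as in the paper's $\msf B$-analysis. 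Your route needs only the pairwise conditional structure rather than the triple one, and the Poincar\'e step is a slightly cleaner way to surface the dimension factor $d$ than the paper's second-moment bound on $\E\|X-Y\|^2$; on the other hand, Jensen forfeits a factor of $k$ that the paper's cross-covariance expansion could in principle save, though the paper also does not exploit this tightness (its cross-term bound is loose) and $k^3$ is in any case forced by the $\msf B$-term via $\KL_{k+1}$, so nothing is lost. One small slip: your $T_2$ bound should read $\E\|T_2^{(i)}\|^2 \lesssim \beta_W^2\,\bar C_{\LSI}\,\Otilde(dk^2/N^2)$ (Lemma~\ref{lem:lacker-poc} already carries the dimension factor), so the concluding remark that $T_1$ ``dominates thanks to its extra factor of $d$'' is not accurate --- the two contributions are of the same order --- but this does not affect the final estimate.
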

\begin{proof}
Using the representations given in~\eqref{eq:marginal-score} and~\eqref{eq:mf-minimizer}, we derive
\begin{align*}
    \bar C_{\LSI}\FI(\mu^{[k]} \mmid \pi^{\otimes k}) &= \bar C_{\LSI} \sum_{i=1}^k \E_{\mu^{[k]}}\bigl[\bigl \lVert \frac{1}{N-1} \sum_{\substack{j=1\\ j \neq i}}^k \nabla W(X^i -X^j) - \int \nabla W(X^i - \cdot) \, \D \pi \\
    &\hspace{90pt} + \frac{N-k}{N-1} \int \nabla W(X^i - \cdot) \, \D \mu^{k+1 \mid [k]}(\cdot \mid X^{[k]})\bigr\rVert^2\bigr]\\
    &\leq \frac{2k \bar C_{\LSI}}{(N-1)^2} \underset{\msf A}{\underbrace{\E_{\mu^{[k]}}\bigl[\norm{{\sum_{j=2}^k} \nabla W(X^1 - X^j) - \int \nabla W(X^1 - \cdot)\, \D \pi}^2\bigr]}} \\
    &\hspace{40pt} + \frac{2k \bar C_{\LSI} (N-k)^2}{(N-1)^2}\underset{\msf B}{\underbrace{\E_{\mu^{[k]}}\bigl[\norm{\int \nabla W(X^1 - \cdot)\Bigl(\, \D \mu^{k+1 \mid [k]}(\cdot \mid X^{[k]}) - \, \D \pi \Bigr)}^2\bigr]}}\,,
\end{align*}
where the second line follows from exchangeability.
We now handle terms $\msf A, \msf B$ separately.
\begin{align*}
    \msf{A} 
    &= \sum_{j=2}^k \E_{\mu^{[k]}}[\norm{\nabla W(X^1 - X^j) - \E_\pi\nabla W(X^1 - \cdot)}^2] \\
    &\qquad + \sum_{\substack{i,j=2 \\i \neq j}}^k \E_{\mu^{[k]}} \bigl\langle \nabla W(X^1 - X^i) - \E_\pi \nabla W(X^1 - \cdot),\,\nabla W(X^1 - X^j) - \E_{\pi}\nabla W(X^1 - \cdot)\bigr\rangle \\
    &\underset{\text{(i)}}{=} (k-1) \E[\norm{\nabla W(X^1 - X^2) - \E_{ \pi}\nabla W(X^1 - \cdot)}^2] \\
    &\qquad +(k-1)\,(k-2)\E\bigl \langle \nabla W(X^1- X^2) - \E_{\pi} \nabla W(X^1 - \cdot),\\
    &\qquad\qquad\qquad\qquad\qquad \qquad\qquad\qquad\qquad\qquad\qquad\nabla W(X^1 - X^3) - \E_{\pi}\nabla W(X^1 - \cdot)\bigr\rangle \\
    &\underset{\text{(ii)}}{\leq} k\, \beta_W^2 \E[\norm{X - Y}^2] \\
    &\quad + k^2 \E\bigl \langle \nabla W(X^1- X^2) - \E_{\pi} \nabla W(X^1 - \cdot),\,
    \nabla W(X^1 - X^3) - \E_{\pi}\nabla W(X^1 - \cdot)\bigr\rangle\,.
\end{align*}
(i) uses exchangeability of the particles and (ii) uses smoothness of $W$. Here, $X\sim \mu^1$ and $Y\sim \pi$ are independent. We handle these two terms separately.

Optimally couple $Z \sim \pi$ and $X$ in the first term, so that by independence and sub-Gaussian concentration (implied by~\eqref{eq:lsi}),
\begin{align}
    \E[\norm{X-Y}^2]
    &\lesssim \E[\norm{X-Z}^2] + \E[\norm{Y-Z}^2]
    \leq \,W_2^2(\mu^1,\pi) + 4\E[\norm{Y-\E Y}^2] \nonumber\\
    &\lesssim  \bar C_{\LSI} \KL(\mu^1 \mmid \pi) + d \bar C_{\LSI}\,,\label{eq:second_moment_bd}
\end{align}
where the second inequality follows from \eqref{eq:talagrand}, and the last one follows from the data-processing inequality for the $\KL$ divergence.

Applying the Cauchy--Schwarz inequality to the second leads to
\begin{align}
    &\E\bigl \langle \nabla W(X^1- X^2) - \E_{\pi} \nabla W(X^1 - \cdot),\, \nabla W(X^1 - X^3) - \E_{\pi}\nabla W(X^1 - \cdot)\bigr\rangle \nonumber\\
    &\qquad = \E\bigl \langle \nabla W(X^1- X^2) - \E_{\pi} \nabla W(X^1 - \cdot),\,\E_{\mu^{3 \mid [2]}(\cdot \mid X^{[2]})} \nabla W(X^1 - \cdot) - \E_{\pi}\nabla W(X^1 - \cdot)\bigr\rangle \nonumber\\
 & \qquad\leq\beta_{W}^{2}\sqrt{\E[\norm{X-Y}^{2}]}\sqrt{\E W_2^2\bigl(\mu^{3\mid [2]}(\cdot \mid X^{[2]}),\, \pi\bigr)}\nonumber \\
 & \qquad\underset{\text{(i)}}{\lesssim}\beta_{W}^{2}\sqrt{\bar C_{\msf{LSI}}\,(\KL_{3}+d)}\sqrt{\bar C_{\msf{LSI}}\E\KL\bigl(\mu^{3\mid [2]}(\cdot \mid X^{[2]}) \bigm\Vert \pi\bigr)}\nonumber \\
 & \qquad\underset{\text{(ii)}}{\lesssim}\beta_{W}^{2}\bar C_{\msf{LSI}}\sqrt{\KL_{3}+d}\sqrt{\KL_{3}}\lesssim \beta_{W}^{2}\bar C_{\msf{LSI}}\,(\KL_{3}+d)\,,\nonumber 
\end{align}
where in (i) we applied the bound~\eqref{eq:second_moment_bd} as well as~\eqref{eq:talagrand}, and in (ii) we used the chain rule for the $\msf{KL}$ divergence, dropping the negative term. Here, $\KL_k$ is a shorthand for $\KL(\mu^{[k]} \mid \pi^{\otimes k})$.

We return to the analysis of the term $\msf B$. We obtain 
\begin{align*}
\msf B
&= \E\Bigl[\Bigl\lVert\int \nabla W(X^1 - \cdot) \,\bigl(\D \mu^{k+1|[k]}(\cdot \mid X^{[k]}) - \D \pi\bigr)\Bigr\rVert^2\Bigr]
\le \beta_{W}^{2}\E W_{2}^{2}\bigl(\mu^{k+1|[k]}(\cdot\mid X^{[k]}),\,\pi\bigr)\\
 & \lesssim \beta_{W}^{2}\bar C_{\msf{LSI}}\KL_{k+1}\,.
\end{align*}
where the second line follows in a similar way as for term $\msf A$.
Putting our bounds on $\msf A$ and $\msf B$ together, we obtain for $N\geq 30$,
\begin{align}\label{eq:recursive-ineq}
\msf{FI}(\mu^{[k]} \mmid \pi^{\otimes k}) \lesssim \frac{ k^{3}\beta_{W}^{2}\bar C_{\msf{LSI}}^2}{N^2}\,(\KL_{3}+d)+{k\beta_{W}^{2}\bar C_{\msf{LSI}}^2}\KL_{k+1}\,.
\end{align}
The result follows by appealing to Lemma~\ref{lem:lacker-poc} and using Assumption~\ref{as:weak-interaction}.
\end{proof}

We again remark that the extra factor of $k$ can be removed when attempting the propagation of chaos argument directly for $\KL$ via a clever bootstrap argument, as seen in Lemma~\ref{lem:lacker-poc} below. However, for $\FI$, this type of argument does not appear to easily apply.

In the proof of Lemma~\ref{lem:fisher-poc}, we made use of the sharp propagation of chaos in $\KL$ divergence, which originally stems from the analysis of~\citet{lacker2023hierarchies, lacker2023sharp}, and is given without proof below:
\begin{lemma}[{$\KL$-propagation of chaos; adapted from~\citet[Theorem 3]{kook2024sampling}}]\label{lem:lacker-poc}
    Under Assumptions~\ref{as:smoothness},~\ref{as:isoperimetry} and \ref{as:weak-interaction}, for $N \gtrsim 1$ sufficiently large and $k \in [N]$, it holds that
    \begin{align*}
        \KL(\mu^{[k]} \mmid \pi^{\otimes k}) = \widetilde O\Bigl(\frac{dk^2}{N^2} \Bigr)\,.
    \end{align*}
\end{lemma}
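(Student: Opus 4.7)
The plan is to follow the hierarchical approach of~\citet{lacker2023hierarchies} as implemented in~\citet[Theorem 3]{kook2024sampling}. The argument parallels the one for Lemma~\ref{lem:fisher-poc} but is carried out directly at the level of $\KL$.

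First, I would use the chain rule for relative entropy:
\begin{align*}
\KL_k - \KL_{k-1} = \E_{X^{[k-1]} \sim \mu^{[k-1]}} \bigl[\KL\bigl(\mu^{k \mid [k-1]}(\cdot \mid X^{[k-1]}) \mmid \pi\bigr)\bigr]\,.
\end{align*}
Since $\pi$ satisfies~\eqref{eq:lsi} with constant $\bar C_{\LSI}$ by Assumption~\ref{as:isoperimetry}, each conditional $\KL$ is bounded by $\tfrac{\bar C_{\LSI}}{2}$ times the corresponding conditional Fisher information.

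Second, the conditional score is computed by marginalizing out coordinates $\{k+1,\dots,N\}$ exactly as in~\eqref{eq:marginal-score}. The score difference against $-\nabla\log\pi$ splits into (i) a finite-sample interaction among the $k-1$ conditioning particles, and (ii) the averaged term $\tfrac{N-k}{N-1}\!\int\nabla W(x^k-\cdot)\,\D\mu^{k+1\mid[k]}(\cdot\mid x^{[k]}) - \int\nabla W(x^k-\cdot)\,\D\pi$. By Assumption~\ref{as:smoothness}, the squared norm of term (ii) is at most $\beta_W^2\, W_2^2(\mu^{k+1\mid[k]}(\cdot\mid x^{[k]}),\pi)$, and Talagrand's inequality~\eqref{eq:talagrand} then controls this by $2\bar C_{\LSI}\beta_W^2\,\KL(\mu^{k+1\mid[k]}(\cdot\mid x^{[k]})\mmid\pi)$. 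Term (i) is handled by exchangeability and a second-moment bound analogous to~\eqref{eq:second_moment_bd}, contributing a piece of order $\bar C_{\LSI}\beta_W^2(d+\KL_{\text{low}})/N^2$ where $\KL_{\text{low}}$ denotes a $\KL$ at a small fixed level. Averaging over the conditioning and recombining via the chain rule produces a recursive inequality of schematic form
\begin{align*}
\KL_k \lesssim c\,\KL_{k+1} + \frac{d\,k^2}{N^2}\,,
\end{align*}
with $c = O(\beta_W^2\bar C_{\LSI}^2)$ strictly less than $1$ by Assumption~\ref{as:weak-interaction}.

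Finally, iterating this recursion from the trivial boundary condition $\KL_N = O(N)$ and summing the resulting geometric series yields $\KL_k = \widetilde O(dk^2/N^2)$, provided $N$ is large enough that the contraction dominates the boundary term (the threshold is logarithmic and absorbed by the $\widetilde O$). The main technical obstacle is preserving the quadratic scaling in $k$: a naive iteration of the above recursion produces $k^3/N^2$, and sharpening this to $k^2$ requires Lacker's bootstrap, in which an a priori coarse bound on $\KL_j$ is fed back into the recursion to shed one factor of $k$. The detailed execution of this bootstrap is carried out in~\citet[Theorem 3]{kook2024sampling}.
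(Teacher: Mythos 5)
The paper does not actually prove this lemma; it is stated ``without proof'' and credited to~\citet[Theorem 3]{kook2024sampling}, so there is no in-paper proof to compare against. Your sketch does follow the expected Lacker-style hierarchical argument, and citing out the bootstrap step is consistent with what the paper does.

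One inconsistency worth flagging in your write-up: if the schematic recursion $\KL_k \lesssim c\,\KL_{k+1} + d k^2/N^2$ with $c < 1$ held literally, iterating it (geometric series in $c$, with the boundary $\KL_N$ suppressed by $c^{N-k}$) would already give $\widetilde O(dk^2/N^2)$ with no $k^3$ loss and no bootstrap needed. The $k^3$ loss you mention actually arises one level lower, in the recursion on the \emph{increments} $a_k = \KL_k - \KL_{k-1}$ obtained from the chain rule. If the cross-correlation part of your term (i) is bounded crudely by Cauchy--Schwarz (as in the $\sqrt{\KL_3+d}\sqrt{\KL_3}\lesssim \KL_3+d$ step visible in the proof of Lemma~\ref{lem:fisher-poc}), the increment inequality reads $a_k \lesssim \alpha\, a_{k+1} + d k^2/N^2$; summing $k$ increments then gives $\sum_{j\le k} d j^2/N^2 = \Theta(dk^3/N^2)$, even though each increment recursion contracts. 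The bootstrap's job is to improve the inhomogeneous term from $dk^2/N^2$ to $d k/N^2$ by feeding an a priori $\KL_3$ bound back into the cross-term estimate, after which the sum is $\Theta(dk^2/N^2)$. So the bootstrap sharpens the inhomogeneous term, not the geometric iteration; your text attributes the $k^3\to k^2$ improvement to the wrong place. The boundary choice $\KL_N = O(N)$ is acceptable since any polynomially sized boundary value is killed by the contraction, as you note. With that clarification the proposal is aligned with the cited reference, and the remark after Lemma~\ref{lem:fisher-poc} in the paper (that the ``extra factor of $k$'' is removed for $\KL$ but not $\FI$ via the bootstrap) is exactly the point being glossed over.
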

Finally, the following lemma is used to control the exponential term arising from Donsker--Varadhan, and contains the most involved proof. It will largely be reliant on the Lipschitz hierarchy lemma, which we prove in the subsequent section.
\begin{lemma}[Exponential moment of $\zeta_k$]\label{lem:zeta-subgsn-lacker}
    Under Assumptions~\ref{as:smoothness},~\ref{as:isoperimetry} and~\ref{as:weak-interaction}, recalling the definition of $\zeta_k$ in~\eqref{eq:zeta-def}, with $c \asymp q^2 \bar C_{\LSI}$ it follows that
    \begin{align*}
        \log \E_{\mu^{[k]}} \exp(\zeta_k) = \Otilde\Bigl(\frac{dk^3 q^2}{N^2} \Bigr)\,,
    \end{align*}
    so long as $N = \widetilde{\Omega}(\sqrt{d} k^{3/2} q)$ for a sufficiently large implied constant.
\end{lemma}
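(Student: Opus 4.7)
The plan is to apply a Herbst-type concentration argument exploiting the LSI for $\mu^{[k]}$ (Assumption~\ref{as:isoperimetry}). Under~\eqref{eq:lsi} with constant $\bar C_{\LSI}$, any $L$-Lipschitz function $F$ of $X^{[k]} \sim \mu^{[k]}$ is sub-Gaussian with variance proxy $\bar C_{\LSI} L^2$, so by Lemma~\ref{lem:subgsn-conc} we have $\log \E \exp(c\,(F - \E F)^2) \lesssim c \bar C_{\LSI} L^2$ whenever $c \bar C_{\LSI} L^2$ is below a universal constant. Combined with $(F - \E F)^2 \le 2 F^2 + 2 (\E F)^2$, this lets us bound $\log \E \exp(c F^2)$ by $c\,\E F^2$ up to a sub-leading $c \bar C_{\LSI} L^2$ fluctuation. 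I will apply this with $F = \norm{\nabla \log \rho_k}$, decomposed along the lines of~\eqref{eq:marginal-score}.

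First I would use~\eqref{eq:marginal-score} and~\eqref{eq:mf-minimizer}; the confining potential $V$ cancels, leaving
\begin{align*}
-\nabla_{x^i} \log \rho_k(X^{[k]}) &= \underbrace{\frac{1}{N-1} \sum_{\substack{j \in [k] \\ j \neq i}} \bigl[\nabla W(X^i - X^j) - \E_\pi \nabla W(X^i - \cdot)\bigr]}_{\deq\, T_i^{(1)}(X^{[k]})} \\
&\quad + \underbrace{\frac{N-k}{N-1} \bigl[\E_{\mu^{k+1 \mid [k]}(\cdot \mid X^{[k]})} - \E_\pi\bigr]\,\nabla W(X^i - \cdot)}_{\deq\, T_i^{(2)}(X^{[k]})}\,.
\end{align*}
By Kantorovich--Rubinstein and Assumption~\ref{as:smoothness}, $\norm{T_i^{(2)}} \le \beta_W\, W_2(\mu^{k+1 \mid [k]}(\cdot \mid X^{[k]}),\,\pi)$ uniformly in $i$. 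The triangle inequality in $\ell^2_k$ then gives
\begin{align*}
\norm{\nabla \log \rho_k(X^{[k]})}^2 \lesssim U(X^{[k]})^2 + k\,\beta_W^2\, W_2^2\bigl(\mu^{k+1 \mid [k]}(\cdot \mid X^{[k]}),\,\pi\bigr)\,, \qquad U \deq \Bigl(\textstyle\sum_i \norm{T_i^{(1)}}^2\Bigr)^{1/2}\,.
\end{align*}

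Next I would concentrate each piece separately under $\mu^{[k]}$. For $U$: Assumption~\ref{as:smoothness} yields a Lipschitz constant $L_U = \widetilde O(k^{3/2}\beta_W/N)$, and $\E U^2 = \widetilde O(dk^3/(\bar C_{\LSI} N^2))$ by calculations analogous to those bounding term $\msf A$ in the proof of Lemma~\ref{lem:fisher-poc}. For the Wasserstein term: Lemma~\ref{lem:lipschitz-recursion} provides a Lipschitz constant $L_W$ for $X^{[k]} \mapsto \mu^{k+1 \mid [k]}(\cdot \mid X^{[k]})$ in $W_2$, and the triangle inequality transfers this to $X^{[k]} \mapsto W_2(\mu^{k+1 \mid [k]}(\cdot \mid X^{[k]}), \pi)$. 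Its expected square is controlled via the $\KL$ chain rule, Talagrand's inequality~\eqref{eq:talagrand}, and Lemma~\ref{lem:lacker-poc}: $\E\,W_2^2(\mu^{k+1 \mid [k]}(\cdot \mid X^{[k]}), \pi) \le 2\bar C_{\LSI}(\KL_{k+1} - \KL_k) = \widetilde O(\bar C_{\LSI}\, dk^2/N^2)$. Applying Lemma~\ref{lem:subgsn-conc} to each piece and combining their squared MGFs via Cauchy--Schwarz gives schematically
\begin{align*}
\log \E \exp(\zeta_k) \lesssim c\bigl(\E U^2 + k\beta_W^2\, \E W_2^2\bigr) + c \bar C_{\LSI}\bigl(L_U^2 + k\beta_W^2 L_W^2\bigr)\,.
\end{align*}
With $c \asymp q^2 \bar C_{\LSI}$, the first group is $\widetilde O(dk^3 q^2/N^2)$, yielding the target bound; the requirement that the second group, and the MGF finiteness condition $c\bar C_{\LSI}(L_U^2 + k\beta_W^2 L_W^2) \lesssim 1$, both hold is precisely what the hypothesis $N = \widetilde\Omega(\sqrt{d}\,k^{3/2} q)$ encodes (via Assumption~\ref{as:weak-interaction}).

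The main obstacle is Lemma~\ref{lem:lipschitz-recursion}: the Wasserstein-Lipschitz control of the conditional measure map $x^{[k]} \mapsto \mu^{k+1 \mid [k]}(\cdot \mid x^{[k]})$. Its proof will be a hierarchical recursion in the spirit of~\cite{lacker2023hierarchies}, but now comparing two conditional measures at different conditioning points $x^{[k]}, \bar x^{[k]}$ rather than tracking a divergence against a fixed product reference. Closing this recursion with a constant small enough to meet the $c \bar C_{\LSI} L_W^2 \lesssim 1$ threshold is precisely where the very weak interaction Assumption~\ref{as:weak-interaction} enters decisively.
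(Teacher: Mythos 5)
Your proposal follows essentially the same route as the paper's proof: the same decomposition of $\nabla\log\rho_k$ into the finite-sum piece and the conditional-expectation piece bounded by $\beta_W W_2(\mu^{k+1\mid[k]},\pi)$, the same reliance on Lemma~\ref{lem:lipschitz-recursion} and Lemma~\ref{lem:subgsn-conc} for the Herbst-type concentration, and the same use of the $\KL$/Fisher propagation-of-chaos bounds for the deterministic (expected) terms, with only a cosmetic difference in how the pieces are recombined (Cauchy--Schwarz on two term-types versus the paper's AM--GM over $\mathsf S_i$ and a lumped $\mathsf T$). One typo worth fixing: you need $F^2\le 2(F-\E F)^2+2(\E F)^2$ (not the reverse inequality you wrote) to pass from centered to uncentered exponential moments.
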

\begin{proof}
First, we recall that $\zeta_k \deq c\bigl(\norm{\nabla \log \rho_k} - \E_{\mu^{[k]}} \norm{\nabla \log \rho_k} \bigr)^2$, with $c \asymp q^2 \bar C_{\LSI}$.

The primary lemma in our proof is a concentration result for $\zeta_k$ based on Lipschitzness. We will simply work with the uncentred quantity $\norm{\nabla \log \rho_k(x^{[k]})}$. First, we have for a fixed $i \in [k]$,
\begin{align*}
    \nabla_{x^i} \log \rho_k(x^{[k]}) &= \frac{1}{N-1} \underset{\msf A_{i,k}}{\underbrace{ \sum_{\substack{j=1\\
    j \neq i}}^k \Bigl(\nabla W(x^i - x^j) - \int \nabla W(x^i - \cdot)\, \D \pi\Bigr)}}\\
    &\qquad + \frac{N-k}{N-1} \underset{\widetilde{\msf B}_{i,k}}{\underbrace{\Bigl(\int \nabla W(x^i - \cdot)\, \bigl(\D \mu^{(k+1)\mid [k]}(\cdot \mid x^{[k]}) - \D \pi\bigr)\Bigr)}}\,.
\end{align*}
We will show Lipschitzness of $\msf A_{i,k}$ and $\msf B_{i,k}$, viewing both implicitly as functions on $\R^d$ of $x^{-i}$. Now, it follows via Lipschitzness of $W$ that for all $x^{[k]}, \bar x^{[k]} \in \R^{d \times k}$,
\begin{align}\label{eq:term-A-lip}
    \norm{\msf A_{i,k}(x^{[k]}) - \msf A_{i,k}(\bar x^{[k]})}^2 \lesssim \beta_W^2 (k\norm{x^{-i} - \bar x^{-i}}^2 + k^2 \norm{x^i - \bar x^i}^2)\,.
\end{align}
Now, let us turn our attention to $\msf B_{i,k}$, which will take some more effort. While it is not easy to directly prove Lipschitzness of $\msf B_{i,k}$ with a good constant, it will be much easier to obtain Lipschitzness of an upper bound on its norm. Using the smoothness of $W$ we have
\begin{align*}
    \norm{\widetilde{\msf B}_{i,k}(x^{[k]})}^2 \leq \beta_W^2 W_2^2(\mu^{k+1 \mid [k]}(\cdot \mid x^{[k]}), \pi) \eqqcolon \msf B_{i,k}^2 \,.
\end{align*}
As we shall see, the right side itself is the square of a Lipschitz function of the variables $x^{[k]}$. We show this through a hierarchical argument reminiscent of the $\KL$ proof in~\cite{lacker2023hierarchies}. See Lemma~\ref{lem:lipschitz-recursion}, which we defer to the subsequent section.

Finally, we show how all of these imply our concentration results. First, we can bound
\begin{align*}
    \Bigl(\norm{\nabla \log \rho_k} - \E_{\mu^{[k]}} \norm{\nabla \log \rho_k} \Bigr)^2 &\lesssim \norm{\nabla \log \rho_k}^2 + \E_{\mu^{[k]}}[\norm{\nabla \log \rho_k}^2]\\
    &\lesssim \sum_{i=1}^k \Bigl(\frac{1}{N^2}\norm{\msf A_{i,k}}^2 + \norm{\msf B_{i,k}}^2 \Bigr) + \E_{\mu^{[k]}}[\norm{\nabla \log \rho_k}^2] \\
    &\lesssim \sum_{i=1}^k \underset{\msf S_i}{\underbrace{\Bigl(\Bigl\lvert\frac{\norm{\msf A_{i,k}}}{N} - \frac{\E_{\mu^{[k]}}[\norm{\msf A_{i,k}}]}{N} + \norm{\msf B_{i,k}}-\E_{\mu^{[k]}}[\norm{\msf B_{i,k}}]\Bigr\rvert^2 \Bigr)}} \\
    &\qquad+ \underset{\msf T}{\underbrace{\sum_{i=1}^k \Bigl(\frac{1}{N^2}\E_{\mu^{[k]}}[\norm{\msf A_{i,k}}^2] + \E_{\mu^{[k]}}[\norm{\msf B_{i,k}}^2] \Bigr) + \E_{\mu^{[k]}}[\norm{\nabla \log \rho_k}^2]}}\,.
\end{align*}
Now, we evaluate, noting that $\msf T$ is deterministic, for some absolute constant $C$,
\begin{align*}
    \E_{\mu^{[k]}} \exp\bigl(\zeta_k\bigr) \lesssim \exp(cC \msf T) \cdot \E_{\mu^{[k]}} \exp\bigl(c C \sum_{i \in [k]} \msf S_i\bigr) \lesssim \exp(cC \msf T) \cdot\max_{i \in [k]}  \E_{\mu^{[k]}} \exp\bigl(c k C\msf S_i\bigr)\,,
\end{align*}
where we applied AM--GM. Now, for all $i \in [k]$, $\norm{\msf B_{i,k}}$ is $\frac{\beta_W \sqrt{k}}{N}$ Lipschitz via Lemma~\ref{lem:lipschitz-recursion} while $\msf A_{i,k}$ is $\beta_W k$ Lipschitz. Thus, each $\msf S_i$ has the form of the square of a centered random variable with Lipschitz constant $\frac{\beta_W k}{N}$. As $\mu^{[k]}$ itself satisfies LSI with constant $\bar C_{\LSI}$, so $\msf S_i$ is sub-Gaussian with variance proxy $\frac{\beta_W^2 k^2 \bar C_{\LSI}}{N^2}$, using Lemma~\ref{lem:subgsn-conc}. As a result, using Lemma~\ref{lem:subgsn-conc}
\begin{align*}
    \log \max_{i \in [k]} \E_{\mu^{[k]}} \exp\bigl(c k C\msf S_i\bigr) \lesssim \frac{\beta_W^2 ck^3 \bar C_{\LSI} }{N^2}\,,
\end{align*}
so long as $\frac{\beta_W^2 ck^3 \bar C_{\LSI} }{N^2} \lesssim 1$ for a sufficiently small implied constant. Recalling that $c \asymp q^2 \bar C_{\LSI}$, and using Assumption~\ref{as:weak-interaction}, we can bound this by $\widetilde O(\frac{k^3 q^2}{N^2})$ as claimed.

Following the proof of Lemma~\ref{lem:fisher-poc}, we can also bound
\begin{align*}
    &\bar C_{\LSI} \sum_{i=1}^k \Bigl(\frac{1}{N^2} \E_{\mu^{[k]}} [\norm{\msf A_{i,k}}^2] + \E_{\mu^{[k]}}[\norm{\msf{B}_{i,k}}^2]\Bigr) = \Otilde\Bigl(\frac{dk^3}{N^2}\Bigr) \qquad
    \bar C_{\LSI}\Bigl(\E_{\mu^{[k]}}[\norm{\nabla \log \rho_k}^2]\Bigr) = \Otilde\Bigl(\frac{dk^3}{N^2}\Bigr)\,.
\end{align*}
This immediately gives a bound on the $\msf T$-term. Putting all this together concludes the proof.
\end{proof}

In the proof above, we made use of some standard sub-Gaussian concentration properties. See e.g.,~\cite{boucheron2003concentration} for proofs.
\begin{lemma}\label{lem:subgsn-conc}
    If $G: \R^d \to \R$ is $L$-Lipschitz, and $\mu \in \mc P(\R^d)$ satisfies~\eqref{eq:lsi} with constant $C_{\LSI}$, then
    \begin{align*}
        \log \E_\mu \exp\bigl(\lambda(G- \E_\mu G) \bigr) \leq \frac{1}{2} \lambda^2 L^2 C_{\LSI}\,.
    \end{align*}
    In particular, this implies that the variance proxy can be bounded by $C_{\LSI} L^2$, and thus that
    \begin{align*}
        \log \E_\mu e^{\lambda (G - \E_\mu G)^2} \leq 2 \lambda L^2 C_{\LSI} \,, \quad \mathrm{if }\quad  \lambda L^2 C_{\LSI} \leq \frac{1}{4}\,.
    \end{align*}
\end{lemma}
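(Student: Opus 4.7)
The plan is to run the classical Herbst argument for the first inequality, and then derive the squared-deviation MGF via a Gaussian decoupling identity. Set $H(\lambda) \deq \log \E_\mu e^{\lambda G}$. Applying~\eqref{eq:lsi} with the test function $f = e^{\lambda G}$, and using $\|\nabla G\| \leq L$ almost everywhere (by Rademacher's theorem, since $G$ is $L$-Lipschitz), I would obtain
\begin{align*}
    \operatorname{ent}_\mu[e^{\lambda G}] \leq \frac{C_{\LSI}}{2}\, \E_\mu[\|\nabla e^{\lambda G}\|^2] = \frac{C_{\LSI} \lambda^2}{2}\, \E_\mu[e^{\lambda G} \|\nabla G\|^2] \leq \frac{C_{\LSI} L^2 \lambda^2}{2}\, \E_\mu[e^{\lambda G}]\,.
\end{align*}
Recognizing the identity $\operatorname{ent}_\mu[e^{\lambda G}]/\E_\mu[e^{\lambda G}] = \lambda H'(\lambda) - H(\lambda)$ and dividing by $\lambda^2 \E_\mu[e^{\lambda G}]$ would yield the Herbst differential inequality $\frac{d}{d\lambda}(H(\lambda)/\lambda) \leq C_{\LSI} L^2/2$. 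Integrating from $0^+$ to $\lambda > 0$, with $H(\lambda)/\lambda \to \E_\mu G$ as $\lambda \to 0^+$ (from a first-order Taylor expansion of $H$ at the origin), gives $H(\lambda) \leq \lambda\, \E_\mu G + \frac{C_{\LSI} L^2 \lambda^2}{2}$, which is the desired statement for $\lambda > 0$. The case $\lambda < 0$ follows by applying the same argument to $-G$.

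For the squared-deviation bound, I plan to use the Gaussian decoupling identity $e^{\lambda y^2} = \E_{Z \sim \mc N(0,1)}[e^{\sqrt{2\lambda}\, y Z}]$, valid for $\lambda > 0$, which is just the Gaussian moment generating function. Applying this identity with $y = G - \E_\mu G$, swapping expectations via Fubini, and invoking the first inequality conditionally on $Z$ with parameter $s = \sqrt{2\lambda}\, Z$, I would arrive at
\begin{align*}
    \E_\mu e^{\lambda(G - \E_\mu G)^2} \leq \E_Z e^{\lambda L^2 C_{\LSI} Z^2} = (1 - 2\lambda L^2 C_{\LSI})^{-1/2}\,,
\end{align*}
valid provided $2\lambda L^2 C_{\LSI} < 1$ (required for the standard Gaussian moment formula). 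The elementary estimate $-\log(1-x) \leq 2x$ for $x \in [0, 1/2]$ then delivers $\log \E_\mu e^{\lambda(G - \E_\mu G)^2} \leq 2\lambda L^2 C_{\LSI}$ under the stated restriction $\lambda L^2 C_{\LSI} \leq 1/4$.

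Neither step presents a genuine obstacle. The only mildly subtle point is the boundary behaviour of $H(\lambda)/\lambda$ at $\lambda = 0$ after the Herbst division, but this is resolved immediately by Taylor expansion $H(\lambda) = \lambda\, \E_\mu G + O(\lambda^2)$. The rest of the argument is purely mechanical.
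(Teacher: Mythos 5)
Your proof is correct in substance and follows the standard route: the Herbst argument for the first inequality, then Gaussian decoupling to pass from a linear MGF bound to a squared-deviation MGF bound. The paper itself does not prove this lemma, citing Boucheron--Lugosi--Massart, so there is no "paper proof" to compare against; your argument is a perfectly reasonable self-contained one.

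One computational slip worth flagging: in the Herbst step you apply the LSI to $f = e^{\lambda G}$ and write $\E_\mu[\|\nabla e^{\lambda G}\|^2] = \lambda^2 \E_\mu[e^{\lambda G}\|\nabla G\|^2]$. In fact $\|\nabla e^{\lambda G}\|^2 = \lambda^2 e^{2\lambda G}\|\nabla G\|^2$, so that identity is off by a factor inside the exponential and would leave you with $\E_\mu[e^{2\lambda G}]$ rather than $\E_\mu[e^{\lambda G}]$, which breaks the Herbst differential inequality. The fix is the usual one: apply the log-Sobolev inequality in its canonical form $\operatorname{ent}_\mu[f^2]\le 2C_{\LSI}\,\E_\mu[\|\nabla f\|^2]$ to the test function $f = e^{\lambda G/2}$, giving $\|\nabla f\|^2 = \tfrac{\lambda^2}{4}\,e^{\lambda G}\|\nabla G\|^2$, whence $\operatorname{ent}_\mu[e^{\lambda G}]\le \tfrac{C_{\LSI}\lambda^2}{2}\,\E_\mu[e^{\lambda G}\|\nabla G\|^2]\le \tfrac{C_{\LSI}L^2\lambda^2}{2}\,\E_\mu[e^{\lambda G}]$, exactly the intermediate bound you wanted. (The paper's display \eqref{eq:lsi} is written with $\operatorname{ent}_\pi f$ rather than $\operatorname{ent}_\pi[f^2]$ on the left, but the proof of Lemma~\ref{lem:renyi-lsi} invokes it in the $f^2$ form, so that is evidently the intended convention.) After that fix, everything downstream is mechanical: the Herbst integration gives $\log\E_\mu e^{\lambda(G-\E_\mu G)}\le \tfrac12\lambda^2 L^2 C_{\LSI}$ for all real $\lambda$ (applying the argument to $\pm G$), and the decoupling step $e^{\lambda y^2}=\E_Z e^{\sqrt{2\lambda}\,yZ}$, Fubini, the Gaussian MGF $\E_Z e^{aZ^2}=(1-2a)^{-1/2}$, and the elementary bound $-\log(1-x)\le 2x$ on $[0,1/2]$ deliver the second claim precisely as you wrote it.
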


\subsection{Lipschitz recursion}
\begin{lemma}\label{lem:lipschitz-recursion}
    Under Assumption~\ref{as:weak-interaction}, $N \geq 100$, $k \ll N$, it holds for any $x^{[k]}, \bar x^{[k]} \in \R^{d \times k}$ that
    \begin{align*}
        \abs{W_2(\mu^{k+1 \mid [k]}(\cdot \mid x^{[k]}) , \pi) - W_2(\mu^{k+1 \mid [k]}(\cdot \mid \bar x^{[k]}), \pi)} \lesssim \frac{\sqrt{k} \norm{x^{[k]} - \bar x^{[k]}}}{N}\,.
    \end{align*}
\end{lemma}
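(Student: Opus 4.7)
By the triangle inequality for $W_2$, it suffices to control $W_2\bigl(\mu^{k+1\mid[k]}(\cdot\mid x^{[k]}),\,\mu^{k+1\mid[k]}(\cdot\mid\bar x^{[k]})\bigr)$, so the goal is to establish a uniform Lipschitz bound $L_k\lesssim\sqrt{k}/N$ on the map $x^{[k]}\mapsto\mu^{k+1\mid[k]}(\cdot\mid x^{[k]})$ in the $W_2$ metric, by backward induction on $k$. Since Assumption~\ref{as:isoperimetry} provides~\eqref{eq:lsi} with constant $\bar C_{\LSI}$ for every such conditional marginal, chaining~\eqref{eq:talagrand} with the standard LSI-to-Fisher estimate $\KL\leq(\bar C_{\LSI}/2)\FI$ reduces the task to a bound on the relative Fisher information,
\begin{align*}
W_2^{2}\bigl(\mu^{k+1\mid[k]}(\cdot\mid x^{[k]}),\,\mu^{k+1\mid[k]}(\cdot\mid\bar x^{[k]})\bigr)\;\leq\;\bar C_{\LSI}^{\,2}\,\FI\bigl(\mu^{k+1\mid[k]}(\cdot\mid x^{[k]})\mmid\mu^{k+1\mid[k]}(\cdot\mid\bar x^{[k]})\bigr)\,.
\end{align*}

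\textbf{Score decomposition and backward recursion.} Applying~\eqref{eq:marginal-score} to $\mu^{[k+1]}$ at the $(k+1)$-st coordinate, and noting that $\log\mu^{[k]}(x^{[k]})$ is independent of $y$, I would derive
\begin{align*}
-\nabla_{y}\log\mu^{k+1\mid[k]}(y\mid x^{[k]})=\nabla V(y)+\frac{1}{N-1}\sum_{j\in[k]}\nabla W(y-x^{j})+\frac{N-k-1}{N-1}\,\E_{z\sim\mu^{k+2\mid[k+1]}(\cdot\mid x^{[k]},y)}\nabla W(y-z)\,.
\end{align*}
Differencing this at $x^{[k]}$ and $\bar x^{[k]}$, the $\nabla V$ term cancels and two contributions remain. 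The explicit sum is bounded pointwise in norm via $\beta_W$-smoothness and Cauchy--Schwarz by $(\beta_{W}\sqrt{k}/(N-1))\,\|x^{[k]}-\bar x^{[k]}\|$. The conditional-expectation term is a difference of integrals of the $\beta_W$-Lipschitz map $z\mapsto\nabla W(y-z)$ against $\mu^{k+2\mid[k+1]}(\cdot\mid x^{[k]},y)$ and $\mu^{k+2\mid[k+1]}(\cdot\mid\bar x^{[k]},y)$; since these conditioning vectors agree in their last coordinate, the inductive hypothesis $L_{k+1}$ controls its norm by $\beta_{W}\,L_{k+1}\,\|x^{[k]}-\bar x^{[k]}\|$. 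Squaring, integrating over $y$ under $\mu^{k+1\mid[k]}(\cdot\mid x^{[k]})$, and feeding into the Talagrand--LSI bound above yields the backward recursion
\begin{align*}
L_k^{2}\;\leq\;2\,\bar C_{\LSI}^{\,2}\,\beta_{W}^{2}\,\Bigl(\frac{k}{(N-1)^{2}}+L_{k+1}^{2}\Bigr)\,.
\end{align*}

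\textbf{Base case, iteration, and main obstacle.} At $k=N-1$ the conditional-expectation term disappears since $N-k-1=0$, so directly $L_{N-1}^{2}\lesssim(\bar C_{\LSI}\beta_{W})^{2}(N-1)/N^{2}$. Writing $c_{1}\deq 2(\bar C_{\LSI}\beta_{W})^{2}$, which is arbitrarily small under Assumption~\ref{as:weak-interaction}, iterating the recursion from $N-1$ downward yields
\begin{align*}
L_k^{2}\;\leq\;\frac{c_{1}}{(N-1)^{2}}\sum_{j=0}^{N-1-k}c_{1}^{\,j}\,(k+j)\;\lesssim\;\frac{c_{1}\,k}{(1-c_{1})^{2}\,N^{2}}\,,
\end{align*}
producing $L_k\lesssim\sqrt{k}/N$, and the conclusion follows by the triangle inequality applied to the reference measure $\pi$. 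The main obstacle is the closure of this geometric series: each application of the recursion reintroduces an additive $k/N^{2}$ term from the explicit sum, and absent smallness of $c_{1}$ these contributions would compound to order $\sum_{j=0}^{N-k}j\asymp N^{2}$ and destroy the bound. It is precisely the very weak interaction assumption that forces $c_{1}<1$ with enough room for the resulting geometric series to sum to $O(k/N^{2})$, closing the induction.
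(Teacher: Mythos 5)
Your argument is correct, and while it shares the same high-level scaffolding as the paper (triangle inequality, then Talagrand plus the LSI-to-Fisher bound to reduce to a relative Fisher information, then a score decomposition), the recursion you set up is genuinely different. The paper fixes $k$ and recurses \emph{forward} in the block size $\ell$ of $\upmu^\ell \deq \mu^{k+1:k+\ell\mid[k]}(\cdot\mid x^{[k]})$, using the exact $\KL$ chain-rule identity
\begin{align*}
\E_{z^{[\ell]}\sim\upmu^\ell}\KL\bigl(\mu^{k+\ell+1\mid[k+\ell]}(\cdot\mid x^{[k]},z^{[\ell]})\bigm\Vert\mu^{k+\ell+1\mid[k+\ell]}(\cdot\mid\bar x^{[k]},z^{[\ell]})\bigr)=\KL(\upmu^{\ell+1}\mmid\bar\upmu^{\ell+1})-\KL(\upmu^{\ell}\mmid\bar\upmu^{\ell})
\end{align*}
to telescope; this produces $\ell$-dependent coefficients $C_\ell=\tfrac{2\ell\beta_W^2\bar C_{\LSI}^2}{1+2\ell\beta_W^2\bar C_{\LSI}^2}$ which approach $1$ and require the nontrivial product estimate of Lemma~\ref{lem:coeff-product}. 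You instead recurse \emph{backward} in the conditioning dimension $k$ on the one-particle conditional $\mu^{k+1\mid[k]}$, taking the pointwise $W_2$-Lipschitz constant $L_k$ as the recursed quantity and plugging the inductive Lipschitz bound into the Kantorovich-duality estimate for the conditional-expectation term. This makes the recursion coefficient a constant $c_1=2(\bar C_{\LSI}\beta_W)^2$, which is small under Assumption~\ref{as:weak-interaction}, so the geometric series closes immediately without any coefficient-product lemma; this is a real simplification. The one place you should be a little more careful is the inductive claim: the quantity $L_{k+1}$ is a Lipschitz constant uniform over all pairs of conditioning vectors in $\R^{d\times(k+1)}$, and you apply it to pairs $(x^{[k]},y)$, $(\bar x^{[k]},y)$ that agree in the last coordinate---this is of course permitted, but it is worth stating. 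You should also note explicitly that the $(N-k-1)/(N-1)\leq 1$ prefactor on the conditional-expectation term has been dropped. With those small clarifications the proof is complete and, I would argue, a cleaner route to the same $\sqrt{k}\,\|x^{[k]}-\bar x^{[k]}\|/N$ conclusion; the paper's block/chain-rule structure is more in the spirit of the entropic hierarchy of~\cite{lacker2023hierarchies} and would be the natural one to use if one wanted sharper constant tracking or to push beyond Assumption~\ref{as:weak-interaction}.
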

\begin{proof}
    Applying the triangle inequality then~\eqref{eq:talagrand} and then~\eqref{eq:lsi}, we find that, letting $\upmu^{\ell}, \bar \upmu^{\ell}$ denote the measures $\mu^{k+1:k+\ell \mid [k]}(\cdot \mid x^{[k]}), \mu^{k+1:k+\ell \mid [k]}(\cdot \mid \bar x^{[k]})$ respectively for $\ell \in [N-k]$,
    \begin{align*}
        \abs{W_2(\upmu^\ell,\pi) - W_2(\bar \upmu^\ell(\cdot), \pi)} \leq \abs{W_2(\upmu^\ell, \bar \upmu^\ell)}
        \leq \sqrt{2\bar C_{\LSI}} \sqrt{\KL\bigl(\upmu^\ell \mid \bar \upmu^\ell\bigr)} \leq \bar C_{\LSI} \sqrt{\FI\bigl(\upmu^\ell \mid \bar \upmu^\ell\bigr)}\,.
    \end{align*}
    Now, we have for $i \in [\ell]$,
    \begin{align*}
        \nabla_{z^i} \log \upmu^\ell(z^{1:\ell}) &= -\nabla V(z^i) - \frac{1}{N-1} \sum_{j=1}^k \nabla W(z^i-x^j) - \frac{1}{N-1} \sum_{\substack{j'=1 \\ j \neq j'}}^\ell \nabla W(z^i - z^{j'}) \\
        &\qquad- \frac{N-k-\ell}{N-1} \int \nabla W(z^i-\cdot) \, \D \mu^{k+\ell+1 \mid [k]+\ell}(\cdot \mid x^{[k]}, z^{1:\ell})\,.
    \end{align*}
    As a result, we can proceed with, defining $\uprho^\ell(z^{1:\ell}) = \frac{\upmu^\ell}{\bar \upmu^\ell}(z^{1:\ell})$, using exchangeability of $\upmu^\ell$ in its $\ell$ coordinates,
    \begin{align*}
        \E_{\upmu^\ell}&[\norm{\nabla \log \uprho^\ell}^2] \\
        &\leq \frac{2\ell}{(N-1)^2}\, \underset{\msf C_\ell}{ \underbrace{\E_{\upmu^\ell}\Bigl[\norm{\sum_{j=1}^k (\nabla W(z^1-x^j) - \nabla W(z^1-\bar x^j))}^2 \Bigr]}} \\
        &\qquad+ \frac{2\ell (N-k-\ell)^2}{(N-1)^2}\, \underset{\msf D_\ell}{\underbrace{\E_{z^{[\ell]} \sim \upmu^\ell}\Bigl[\norm{\int \nabla W(z^1-\cdot)\{ \D \mu^{k+\ell + 1 \mid [k+\ell]} - \D \bar \mu^{k+\ell+1 \mid [k+\ell]}\} }^2 \Bigr]}}\,,
    \end{align*}
    where we defined $\mu^{k+\ell+1 \mid [k+\ell]} = \mu^{k+\ell+1 \mid [k+\ell]}(\cdot \mid x^{[k]}, z^{[\ell]})$ and $\bar \mu^{k+\ell+1 \mid [k+\ell]} = \mu^{k+\ell+1 \mid [k+\ell]}(\cdot \mid \bar x^{[k]}, z^{[\ell]})$ as shorthands.
    For the first term, simply applying Lipschitzness of $\nabla W$ and Cauchy--Schwarz permits us to bound it by
    \begin{align*}
        \msf C_\ell \leq \beta_W^2 k \norm{x^{[k]} - \bar x^{[k]}}^2\,.
    \end{align*}
    On the other hand, the chain-rule for $\KL$ and Assumption~\ref{as:isoperimetry} yield
    \begin{align*}
        \msf D_\ell &\leq \beta_W^2 \E_{z^{[\ell]} \sim \upmu^\ell} W_2^2(\mu^{k+\ell+1\mid [k+\ell]}(\cdot \mid x^{[k]}, z^{[\ell]}), \mu^{k+\ell+1 \mid [k+\ell]}(\cdot \mid \bar x^{[k]}, z^{[\ell]})) \\
        &\leq 2\beta_W^2 \bar C_{\LSI} \E_{z^{[\ell]} \sim \upmu^\ell} \KL(\mu^{k+\ell+1 \mid [k+\ell]}(\cdot \mid x^{[k]}, z^{[\ell]}) \mmid \mu^{k+\ell+1 \mid [k+\ell]}(\cdot \mid \bar x^{[k]}, z^{[\ell]})) \\
        &= 2\beta_W^2 \bar C_{\LSI}\Bigl\{\KL(\upmu^{\ell+1} \mmid \bar \upmu^{\ell+1}) - \KL(\upmu^\ell \mmid \bar \upmu^\ell)\Bigr\}\,.
    \end{align*}
    We can now rearrange this inequality. Letting $\msf K_\ell = \KL(\upmu^{\ell} \mid \bar \upmu^\ell)$, we have shown the following hierarchy,
    \begin{align*}
        \msf K_\ell \leq \underset{\mc C_\ell}{\underbrace{\frac{2\ell \beta_W^2 \bar C_{\LSI}^2}{1 + 2\ell \beta_W^2 \bar C_{\LSI}^2}}}\, \Bigl(\frac{k}{2\bar C_{\LSI} N^2} \norm{x^{[k]} - \bar x^{[k]}}^2 + \msf K_{\ell+1}  \Bigr)\,.
    \end{align*}
    Finally, at $\msf K_{N-k}$, we bound (as the term $\msf D$ is not present in this case)
    \begin{align*}
        \msf K_{N-k} \leq \frac{\beta_W^2 k \bar C_{\LSI}}{2N} \norm{x^{[k]} - \bar x^{[k]}}^2\,.
    \end{align*}
    Solving this recurrence relation, we obtain, defining the ratio $\xi \deq (2\beta_W^2 \bar C_{\LSI}^2)^{-1}$, 
    \begin{align*}
        \msf K_1 &\leq \biggl\{\Bigl(\prod_{\ell=1}^{N-k-1} C_\ell \Bigr) \frac{\beta_W^2 k \bar C_{\LSI}}{2N} + \sum_{\ell=1}^{N-k-1} \Bigl(\prod_{\ell' = 1}^\ell C_{\ell'}\Bigr) \frac{k}{2\bar C_{\LSI} N^2}\, \biggr\}\norm{x^{[k]} - \bar x^{[k]}}^2 \\
        &\leq \underset{c_N}{\underbrace{\biggl\{\Bigl(\prod_{\ell=1}^{N-k-1} C_\ell \Bigr) \frac{1}{4\xi N} + \sum_{\ell=1}^{N-k-1} \Bigl(\prod_{\ell' = 1}^\ell C_{\ell'}\Bigr) \frac{1}{N^2}\, \biggr\}}} \frac{k\norm{x^{[k]} - \bar x^{[k]}}^2}{2\bar C_{\LSI}} \,.
    \end{align*}
    Now, we use the generic formula in Lemma~\ref{lem:coeff-product} for $k \ll N$,
    \begin{align*}
        \msf K_1 \leq (1+\xi)^\xi \Bigl(\frac{N^{-1-\xi}}{\xi} + \frac{1}{N^2} \sum_{i=1}^{N-k-1} \frac{1}{i^\xi} \Bigr)\frac{k \norm{x^{[k]} - \bar x^{[k]}}^2}{2 \bar C_{\LSI}}\,.
    \end{align*}
    Of course, under our assumptions, we already know that $\xi \geq 2$, and we can always bound it above by an absolute constant by taking a worse bound on the $\beta_W$. As a result, we can bound $\sum_{i \leq N-k-1} i^{-\xi} \lesssim 1$, which suffices to bound this overall by
    \begin{align*}
        \msf K_1 \lesssim  \frac{k \norm{x^{[k]} - \bar x^{[k]}}^2}{\bar C_{\LSI} N^2}\,.
    \end{align*}
\end{proof}

Finally, we made use of the following standard lemma to control the coefficients.
\begin{lemma}[{Adapted from~\citet[Lemma 15]{kook2024sampling}}]\label{lem:coeff-product}
    For $1 \leq i \leq j \leq N-k$, with $\xi \deq \bigl(2 \beta_W^2 \bar C^2_{\LSI} \bigr)^{-1}$, we have
    \begin{align*}
        \prod_{\ell = i}^j C_\ell \leq \Bigl(\frac{i+\xi}{j+1 +\xi} \Bigr)^\xi\,.
    \end{align*}
\end{lemma}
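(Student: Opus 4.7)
The plan is to first simplify $C_\ell$. Substituting the definition $\xi = (2\beta_W^2 \bar C_{\LSI}^2)^{-1}$ into
\[
C_\ell = \frac{2\ell\beta_W^2 \bar C_{\LSI}^2}{1+2\ell\beta_W^2 \bar C_{\LSI}^2}
\]
collapses the expression to the clean form $C_\ell = \frac{\ell}{\ell+\xi} = 1 - \frac{\xi}{\ell+\xi}$. This turns the product we want to bound into $\prod_{\ell=i}^j (1-\tfrac{\xi}{\ell+\xi})$, a form that is amenable to logarithmic estimates.

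Next I would pass to logarithms and apply the elementary bound $\log(1-x)\le -x$ for $x\in[0,1)$ termwise, yielding
\[
\log\prod_{\ell=i}^j C_\ell \;\le\; -\xi\sum_{\ell=i}^j \frac{1}{\ell+\xi}.
\]
The remaining step is a standard integral comparison: since $t\mapsto 1/t$ is decreasing, $\frac{1}{\ell+\xi}\ge \int_{\ell+\xi}^{\ell+1+\xi}\frac{\D t}{t}$, so the sum telescopes under the integral to give
\[
\sum_{\ell=i}^j \frac{1}{\ell+\xi} \;\ge\; \int_{i+\xi}^{j+1+\xi}\frac{\D t}{t} \;=\; \log\frac{j+1+\xi}{i+\xi}.
\]
Exponentiating the resulting inequality $\log\prod C_\ell \le -\xi \log\frac{j+1+\xi}{i+\xi}$ recovers exactly the claimed bound $\prod_{\ell=i}^j C_\ell \le \bigl(\frac{i+\xi}{j+1+\xi}\bigr)^\xi$.

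No step is genuinely difficult here; the only mild obstacle is recognising the correct rewrite of $C_\ell$ so that the factor $\xi$ appears naturally in the exponent, after which the proof is just $\log(1-x)\le -x$ combined with a monotone integral comparison. Note also that this argument requires no assumption on the size of $\xi$ beyond $\xi>0$, so it applies uniformly in the regime where $\xi\gtrsim 1$ (Assumption~\ref{as:weak-interaction}) that Lemma~\ref{lem:lipschitz-recursion} actually invokes.
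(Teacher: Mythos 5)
Your proof is correct: the rewrite $C_\ell=\frac{\ell}{\ell+\xi}$ is exactly the right simplification, the bounds $\log(1-x)\le -x$ and the decreasing-integrand comparison $\frac{1}{\ell+\xi}\ge\int_{\ell+\xi}^{\ell+1+\xi}\frac{\D t}{t}$ are both valid (the first since $\frac{\xi}{\ell+\xi}<1$ for $\ell\ge1$, $\xi>0$), and the telescoping integral gives precisely $\log\frac{j+1+\xi}{i+\xi}$, so exponentiating recovers the claim. The paper does not reproduce the proof (it defers to \citet[Lemma 15]{kook2024sampling}), but the standard route there is the termwise bound $\frac{\ell}{\ell+\xi}\le\bigl(\frac{\ell+\xi}{\ell+1+\xi}\bigr)^\xi$ followed by telescoping; your log-plus-integral-comparison argument is equivalent to this (indeed the termwise bound itself follows from $\log(1+x)\ge\frac{x}{1+x}$ and $\log(1+x)\le x$), so you have arrived at essentially the same calculation in a slightly more streamlined packaging. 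You are also right that no lower bound on $\xi$ is needed; the positivity $\xi>0$ suffices for this lemma, and the assumption $\xi\gtrsim1$ is only used later in Lemma~\ref{lem:lipschitz-recursion} to make the sum $\sum_{i}i^{-\xi}$ convergent.
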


\section{Discussion}

We briefly outline some directions for future work. Firstly, the argument above has not been made dynamic, i.e., for the simultaneous evolutions of a measure along~\eqref{eq:mckean-vlasov} and~\eqref{eq:finite-particle}. This is due to difficulties in propagating Assumption~\ref{as:isoperimetry} along the dynamics, as the stability of the log-Sobolev constant of the conditional measures along the dynamics remains difficult to check outside of the strongly convex case. Another interesting and complementary direction would be to remove the strong log-Sobolev assumptions, in particular those on the conditional measures $\mu^{k+1 \mid [k]}(\cdot \mid x^{[k]})$, by appealing to similar arguments as~\cite{ren2024size}. It would be interesting to consider more general settings, such as the minimizer of general functionals $\mu \mapsto \mc F(\mu)$ considered in~\cite{chen2024uniform, chen2025uniform}; this problem would need significantly new techniques, as the hierarchical argument is not easily adapted to that setting. Finally, the scaling $k^3$ (and the additional polylogarithmic factors) is likely sub-optimal, and reducing these to the standard $k^2$ would be preferred. A similar remark holds for the quadratic scaling $q^2$ compared to the expected $q$ observed in the Gaussian example.

\section*{Acknowledgements}

We thank Sinho Chewi, Daniel Lacker and Songbo Wang for helpful discussions. MSZ was supported by NSERC through a CGS-D award.

\bibliography{main}

\end{document}